\newtheorem{lemma}{Lemma}[section]
\newtheorem{proposition}[lemma]{Proposition}
\newtheorem{remark}[lemma]{Remark}
\newtheorem{remarks}[lemma]{Remarks}
\newtheorem{theorem}[lemma]{Theorem}
\newtheorem{definition}[lemma]{Definition}
\newtheorem{corollary}[lemma]{Corollary}
\newtheorem{cor}[lemma]{Corollary}
\newtheorem*{remark*}{Remark}
\begin{document}
\title {The Symplectic Size of a Randomly Rotated Convex Body}
\author{Efim D. Gluskin and Yaron Ostrover }
\date{}
\maketitle
\begin{abstract}
In this note we study the expected value of certain symplectic capacities of randomly rotated centrally symmetric convex bodies in the classical phase space.
\end{abstract}

\section{Introduction and Results} \label{sec:Int}

Symplectic capacities are fundamental invariants in symplectic topology which roughly speaking measure the ``symplectic size" of sets (see e.g.,~\cite{CHLS} and~\cite{Mc1} for two surveys).
The notion was originally introduced by Ekeland and Hofer in~\cite{EH1}, where a certain symplectic invariant was constructed via Hamiltonian dynamics, although the first examples of  such kind of invariants
were  constructed previously by Gromov in his pioneering work~\cite{G} using the theory of pseudo-holomorphic curves. Shortly after this, 
many other  symplectic capacities were
constructed reflecting different geometrical and dynamical properties. 
All these quantities  play an important role in symplectic topology nowadays, and are closely related with symplectic embedding obstructions on the one hand, and with the existence and behaviour of periodic orbits of Hamiltonian systems
on the other. For the definition of symplectic capacities and some discussions on their properties see e.g.,~\cite{CHLS, HZ, Mc1,Schle}.

In this note we focus on  the classical phase space ${\mathbb R}^{2n}$, equipped with the standard inner product $\langle \cdot, \cdot \rangle$ and the standard symplectic form $\omega$.
Note that  under the usual identification between ${\mathbb R}^{2n}$ and ${\mathbb C}^n$, these two structures are the real and the imaginary parts, respectively, of
the standard Hermitian inner product in ${\mathbb C}^n$. Moreover,  one has that
$\omega(v, u) = \langle v,Ju \rangle$, where the linear operator $J : {\mathbb R}^{2n} \rightarrow {\mathbb R}^{2n}$ defines the standard complex structure on 
${\mathbb C}^n$.
%
Our main interest is the study of the symplectic size of sets in the class 
of convex bodies in ${\mathbb R}^{2n}$, i.e., compact convex subsets with non-empty
interior. It turns out that even in this special class, symplectic capacities are in general very difficult to compute explicitly, and there are only
a few methods to effectively estimate them (for some exceptional cases we refer the reader e.g., to~\cite{AAKO, CCFHR,Her, Hu1,Lu,O}).

In~\cite{EH1} and~\cite{HZ1}, two symplectic capacities, nowadays known as the
Ekeland--Hofer and Hofer--Zehnder  capacities (denoted by $c_{_{\rm EH}}$ and $c_{_{\rm HZ}}$ respectively), were defined 
using a variational principle for the classical action functional from Hamiltonian dynamics.
Moreover, it was
proved (see~\cite{EH1,HZ1} and~\cite{V3})
that for a smooth convex 
body $K \subset {\mathbb R}^{2n}$, these two capacities coincide, and are given by the minimal action over
all closed characteristics on the boundary of $K$.
More preciesly, recall that if $\Sigma \subset {\mathbb R}^{2n}$ is a smooth hypersurface,  then a
closed curve $\gamma$ on $\Sigma$  is called a {\it closed characteristic} of $\Sigma$ if it is tangent to ${\rm ker}(\omega | {\Sigma})$. In other words, if $\gamma(t) + \{x \in {\mathbb R}^{2n} \, | \, \omega (\dot \gamma(t),x) =0\}$ is the tangent space to the hypersurface $\Sigma$ at $\gamma(t)$.
Recall moreover that the symplectic action of a closed curve $\gamma$ is defined by $A(\gamma) = \int_{\gamma} \lambda, $ where $\lambda = pdq$ is the Liouville 1-form, and that the action spectrum of 
${\Sigma}$ is given by
$$ {\mathcal L}({\Sigma}) = \{ |A(\gamma)| \, ; \,  \gamma \ {\rm is \ a \ closed \ characteristic \ on \ } {\Sigma} \}.$$
With these notations, the above mentioned results states that for a smooth convex body $K \subset {\mathbb R}^{2n}$ one has
%
%
%
\begin{equation} \label{EHZ-cap-def} c_{_{\rm EH}}(K) = c_{_{\rm HZ}}(K) = \min \, {\cal L}(\partial K). \end{equation}
Note that although the equalities in~$(\ref{EHZ-cap-def})$
were stated only for smooth convex bodies,
they can naturally be generalized via continuity to the class of all convex bodies 
(see e.g., Section 2.3 in~\cite{AAO}).
In the following, we shall refer to the coinciding Ekeland--Hofer
and Hofer--Zehnder capacities on this class 
 as the Ekeland--Hofer--Zehnder capacity, and denote it by $c_{_{\rm EHZ}}$.

Another important example of a symplectic capacity, which is closely related with Gromov's non-squeezing theorem~\cite{G}, is the cylindrical capacity $\overline c$.
This capacity measures the area of the base of the smallest cylinder $Z^{2n}(r) := B^2(r) \times {\mathbb C}^{n-1}$ (where $B^k(r)$ stands for the $k$-dimensional Euclidean ball of radius $r$ centered at the origin) into which a subset of ${\mathbb R}^{2n}$ (not necessarily convex) could be
symplectically embed.
An alternative description (see e.g., Appendix C in~\cite{Schle}) is   
$$\overline c(U) = \inf {\rm Area} \left (\pi_E ( \phi (U)) \right ),$$ where $\pi_E$
is the orthogonal projection to  $E = \{z \in {\mathbb C}^n \, | \, z_j =0 \ {\rm for } \ j \neq 1 \}$,
and the infimum is taken over all symplectic embeddings $\phi$ of the set $U$ into ${\mathbb R}^{2n}$. 

Recently it was proved by the authors that for centrally symmetric convex bodies  in ${\mathbb R}^{2n}$,
several symplectic capacities, including the Ekeland--Hofer--Zehnder capacity,  the cylindrical capacity, and its linearized version $\overline c_{{\rm Sp}(2n)}$ (see Definition 2.4 in~\cite{GO1}), are all
asymptotically equivalent (see Theorem 1.6 in~\cite{GO1}, and Theorem 1.1 below). In the current note we use  this fact to  estimate the
expected value of the 
Ekeland--Hofer--Zehnder capacity of a randomly rotated centrally symmetric convex body $K \subset {\mathbb R}^{2n}$, at least under some non-degeneracy assumptions.  
To state our results precisely we need first  to recall some more notations.

We equip ${\mathbb R}^n$ with the standard inner product $\langle \cdot , \cdot \rangle$, and denote by $| \cdot |$ the Euclidean norm in ${\mathbb R}^n$, and  by $S^{n-1} = \{ x \in {\mathbb R}^{n} \ | \ |x|=1 \} \subset{\mathbb R}^n$ the unit sphere. For a vector $v \in {\mathbb R}^{n}$ we denote by $\{ v \}^{\perp}$ the hyperplane orthogonal to $v$. For a centrally symmetric convex body $K$ in ${\mathbb R}^n$, i.e., a compact convex subset with non-empty interior such that $K = -K$,
the associated norm on ${\mathbb R}^{n}$ (also known as the Minkowski functional) is
defined by $\| x \|_K = \inf \{\lambda  > 0 \, | \, x \in \lambda K\}$. 
 The support function $h_K : {\mathbb R}^{n} \to {\mathbb R}$ is  $K$ defined by $h_K(u) = \sup \{ \langle x , u \rangle \, | \, x \in K\}$.
 Note that $h_K$ is a norm, 
 and that
 for a direction $u \in S^{n-1}$, the quantity $h_K(u)$ is half the width of the minimal slab orthogonal to $u$ which includes $K$.
The dual (or polar) body of $K$ is defined by
$K^{\circ} = \{ y \in {\mathbb R}^{n} \ | \ h_K(y) \leq 1 \}$.
Note that one has $h_K(u) = \| u \|_{K^{\circ}}$.
Denote by $ r(K) = \max \{r  \, : \, B^n(r) \subseteq K \}$ the inradius of $K$, i.e., the radius of
the largest ball contained in $K$, and by $R(K) = \max\{ |x| \, : \, x \in K\}$ the circumradius of $K$ i.e., the radius of the smallest ball containing $K$. The mean-width of $K$ is defined by
$$ M^*(K) = \int_{S^{n-1}} h_K(x) d\sigma_{n-1}(x),$$
where 
$\sigma_{n-1}$ is the unique rotation invariant probability measure on the unit sphere $S^{n-1}$.

For centrally-symmetric convex bodies $K_1,K_2 \subset {\mathbb R}^n$, and a linear operator $\Gamma \colon {\mathbb R}^n \to {\mathbb R}^n$, we denote by
$$\| \Gamma \|_{K_1 \rightarrow K_2} = \sup\limits_{x \in K_1} \| \Gamma x\|_{K_2} = \sup\limits_{x \in K_1} \sup\limits_{y \in K_2^{\circ}} \langle \Gamma x , y \rangle,$$  the operator norm of $\Gamma$, where the latter is considered as a map between the normed spaces $({\mathbb R}^n, \| \cdot \|_{K_1}) $ and $ ({\mathbb R}^n, \| \cdot \|_{K_2})$.
The tensor product notation $v \otimes u$ denotes the rank-one $n \times n$ matrix whose entries are $v_i u_j$, i.e.,  the matrix
corresponding to the linear operator defined by $v \otimes u(w) = \langle w,v\rangle u$.  As usual, we shall identify linear operators and their matrix representations in the standard basis,
and write $A^T$ and ${\rm Tr}(A)$ for the transpose and the trace of a matrix $A$ respectively.

In what follows, we shall use standard probabilistic notations and terminology:
 a normalized measure space $(\Omega, \nu)$ is called a probability space. A measurable function $\psi : \Omega \rightarrow {\mathbb R}$ is called a random variable, and its integral with respect to $\nu$, denoted by ${\mathbb E}_{\nu} \psi$, is referred to as the expectation of $\psi$.
We recall that the special orthogonal group ${\rm SO}(n)$ is the subgroup of the orthogonal group ${\rm O}(n)$ which consists of all orthogonal transformations
in ${\mathbb R}^n$ of determinant one. It is well known that ${\rm SO}(n)$ admits a unique Haar probability measure $\mu_n$, which is invariant
under  both left and right multiplications. When there is no doubt of confusion, we drop the subscript $n$ and write just $\mu$ to simplify the notation. Equipped with this measure, the space ${\rm SO}(n)$ becomes a probability space.

On top of the standard inner product, we equip the space ${\mathbb R}^{2n} = {\mathbb R}^n \oplus {\mathbb R}^n$ with the usual complex structure $J : {\mathbb R}^{2n} \rightarrow {\mathbb R}^{2n}$ given in coordinates by $J(x,y) = (-y,x)$. For a centrally symmetric convex body $K \subset {\mathbb R}^{2n}$ we denote
\begin{equation*} \label{def-alpha} \alpha(K) :=  \|J\|_{K^{\circ} \rightarrow K} = \sup_{x,y \in K^{\circ}}  \langle Jx,y \rangle. \end{equation*}

Finally, for two quantities $f$ and $g$, we use the notation $f \lesssim g$ as shorthand for the inequality $f \leq c g$ for some universal positive constant $c$. Whenever we write $f  \asymp g$, we mean that  $f \lesssim g$ and $f \lesssim g$.
The letters $C, C_0, c, c_0,c'$ etc.
denote positive
universal constants whose value is not necessarily the same in various
appearances.

The following was proved in~\cite{GO1}:
\begin{theorem} \label{GO-thm}
For every centrally symmetric convex body $K \subset {\mathbb R}^{2n}$
\begin{equation*} \label{eq-bound-from-GO}
 (\alpha(K))^{-1} \leq  c_{_{{\rm EHZ}}}(K) \leq  \overline c(K) \leq   \overline c_{_{{\rm Sp}(2n)}} (K) \leq 4(\alpha(K))^{-1}.
\end{equation*}
\end{theorem}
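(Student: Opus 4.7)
The statement comprises four inequalities, which I plan to treat in turn. The two middle ones are essentially formal: $c_{_{\rm EHZ}}(K) \le \overline c(K)$ follows from the monotonicity of $c_{_{\rm EHZ}}$ under symplectic embeddings combined with the normalization $c_{_{\rm EHZ}}(Z^{2n}(r)) = \pi r^2$, while $\overline c(K) \le \overline c_{_{{\rm Sp}(2n)}}(K)$ is immediate since the infimum defining the linearized capacity is taken over a strictly smaller class of maps. The real work is therefore at the two ends of the chain.

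For the upper bound $\overline c_{_{{\rm Sp}(2n)}}(K) \le 4(\alpha(K))^{-1}$, my plan is to exhibit an explicit linear symplectomorphism that squeezes $K$ into a thin slab. By compactness of $K^{\circ}$, pick $u, v \in K^{\circ}$ attaining $\omega(u, v) = \alpha := \alpha(K)$. After rescaling to $u_0 := \alpha^{-1/2} u$ and $v_0 := \alpha^{-1/2} v$, one has $\omega(u_0, v_0) = 1$ together with $h_K(u_0), h_K(v_0) \le \alpha^{-1/2}$. Extending $\{u_0, v_0\}$ to a symplectic basis of $\mathbb{R}^{2n}$, let $T$ be the linear symplectomorphism sending the first symplectic pair of the standard basis to $(u_0, v_0)$, and set $S := T^T$, which is again symplectic since the symplectic group is closed under transposition. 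For any $y \in K$ and any $e$ in the first symplectic pair,
$$
|\langle Sy, e\rangle| = |\langle y, Te\rangle| \le h_K(Te) \le \alpha^{-1/2};
$$
hence $\pi_E(S(K))$ lies in a square of area $4/\alpha$, yielding the desired bound.

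The main obstacle is the lower bound $(\alpha(K))^{-1} \le c_{_{\rm EHZ}}(K)$. Assume first that $K$ is smooth and strictly convex (the general case follows by continuity). Using $c_{_{\rm EHZ}}(K) = \min \mathcal{L}(\partial K)$, I need to show $|A(\gamma)| \ge 1/\alpha(K)$ for every closed characteristic $\gamma$ on $\partial K$. Parameterize $\gamma$ so that $\dot\gamma = JN$, where $N(t) \in \partial K^{\circ}$ is the polar-dual of $\gamma(t)$ (so $\langle \gamma(t), N(t)\rangle = 1$); then $|A(\gamma)| = T/2$ with $T$ the period, and the operator-norm identity $\alpha(K) = \|J\|_{K^{\circ} \to K}$ gives the pointwise bound $\|\dot\gamma\|_K = \|JN\|_K \le \alpha(K)\|N\|_{K^{\circ}} = \alpha(K)$. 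Hence the desired inequality reduces to the geometric statement that the $K$-length $L_K(\gamma) := \int_0^T \|\dot\gamma\|_K \, dt$ of the orbit is at least $2$. I expect this last step to be the main difficulty: the triangle inequality alone yields only $L_K(\gamma) \ge 2 \sup_t \|\gamma(t) - \gamma(0)\|_K$, and controlling this supremum requires using the detailed structure of the Reeb flow on $\partial K$, most cleanly via Clarke's dual action principle, whose infimum one matches against a single-frequency test loop supported on the $2$-plane spanned by the extremal witnesses $u, v$ to extract the sharp constant.
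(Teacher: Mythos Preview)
This theorem is not proved in the present paper at all: it is quoted from the authors' earlier work~\cite{GO1} (introduced with ``The following was proved in~\cite{GO1}''), so there is no in-paper proof to compare against.

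Assessing your proposal on its own merits: the two middle inequalities are indeed formal, and your argument for the upper bound $\overline c_{_{{\rm Sp}(2n)}}(K)\le 4(\alpha(K))^{-1}$ is correct and is essentially the one in~\cite{GO1}. Picking extremal $u,v\in K^{\circ}$ with $\omega(u,v)=\alpha$, rescaling to a symplectic pair, extending to a symplectic basis, and taking the transpose (which is again symplectic) produces a linear symplectomorphism sending $K$ into a slab whose projection to the first complex line lies in a square of area $4/\alpha$.

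The lower bound is where your proposal has a genuine gap, and you flag it yourself. Your reduction is valid: with the parametrization $\dot\gamma=JN$, $N\in\partial K^{\circ}$, one gets $A(\gamma)=T/2$ and $\|\dot\gamma\|_K\le\alpha(K)$, so $A(\gamma)\ge L_K(\gamma)/(2\alpha(K))$, and the inequality $c_{_{\rm EHZ}}(K)\ge 1/\alpha(K)$ would follow from $L_K(\gamma)\ge 2$ for every closed characteristic on $\partial K$. But you do not prove this last claim; the triangle-inequality observation only gives $L_K(\gamma)\ge 2\sup_t\|\gamma(t)-\gamma(0)\|_K$, and bounding this supremum below by $1$ is precisely the nontrivial content. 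The actual proof in~\cite{GO1} does not go through this geometric length inequality; it works directly with the Clarke dual variational characterization of $c_{_{\rm EHZ}}$ and bounds the dual action functional from below by $1/\alpha(K)$ using the definition of $\alpha(K)$ as an operator norm. Your suggestion to ``match against a single-frequency test loop'' points in the right direction but is not a proof; as written, the argument is incomplete at exactly the step you identify as the main difficulty.
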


Our first result in this note concerns the expectation of the map $O \mapsto c_{_{\rm EHZ}}(OK)$, defined on the group ${\rm SO}(2n)$, where $K \subset {\mathbb R}^{2n}$ is some fixed centrally symmetric convex body.
\begin{theorem} \label{our-main-theorem-first-version}
 Let $K  \subset {\mathbb R}^{2n}$ be a centrally symmetric convex body,  and  $v \in \partial K^{\circ}$  one of the contact point of $K^{\circ}$ with its minimal circumscribed ball. Denote $L = \{v\}^{\perp} \subset {\mathbb R}^{2n}$. 
 Then, for every $0<p <1$ there exists a constant $C_p$, which depends only on $p$, such that
 \begin{equation} \label{expectations-capacity-first} C_0 {\frac {r(K)} {M^*(K^{\circ} \cap L)} } \leq  \Bigl ( {\mathbb E}_{\mu} \bigl ( \left ( {c_{_{\rm EHZ}}}(OK)  \right)^p \bigr ) \Bigr )^{1/p}
 \leq C_p {\frac {r(K)} {M^*(K^{\circ} \cap L)} },
\end{equation}
for some universal constant $C_0>0$.
 \end{theorem}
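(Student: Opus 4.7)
By Theorem~\ref{GO-thm}, $c_{_{\rm EHZ}}(OK)\asymp \alpha(OK)^{-1}$ with universal constants, and writing $T_O:=O^TJO$ (a skew-symmetric orthogonal operator), the identity $(OK)^\circ=OK^\circ$ together with a change of variables gives $\alpha(OK)=\|T_O\|_{K^\circ\to K}=\sup_{w\in K^\circ}\|T_O w\|_K$. The theorem thus reduces to showing
$$\bigl(\mathbb{E}_\mu[\alpha(OK)^{-p}]\bigr)^{1/p}\asymp\frac{r(K)}{M^*(K^\circ\cap L)}.$$
A key distributional lemma, proved using the right-invariance of Haar measure on ${\rm SO}(2n)$, is that $T_Ov$ is uniformly distributed on the sphere of radius $|v|$ inside $L$: conditional on $Ov=y$ (itself uniform on $|v|S^{2n-1}$), the residual freedom in $O$ makes $O^T|_{y^\perp}$ a uniform random orthogonal map $y^\perp\to L$, so $O^TJy$ is uniform on $|v|\cdot S(L)$ and independent of $y$.

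For the upper bound, taking $w=v$ in the supremum defining $\alpha$ yields the pointwise inequality $\alpha(OK)\geq\|T_Ov\|_K=|v|\,h_{K^\circ}(\eta)$, with $\eta$ uniform on $S(L)$. Combining this with $h_{K^\circ}(\eta)\geq h_{K^\circ\cap L}(\eta)$ (since $K^\circ\cap L\subseteq K^\circ$) and a negative-moment Kahane--Khintchine inequality for norms of sphere-distributed vectors---valid for every $0<p<1$ with a constant $C_p$ that diverges as $p\to 1$---gives
$$\bigl(\mathbb{E}_\mu[\alpha(OK)^{-p}]\bigr)^{1/p}\leq r(K)\bigl(\mathbb{E}[h_{K^\circ\cap L}(\eta)^{-p}]\bigr)^{1/p}\leq \frac{C_p\,r(K)}{M^*(K^\circ\cap L)}.$$

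For the lower bound, Jensen's inequality applied to the convex function $x\mapsto x^{-p}$ gives $(\mathbb{E}[\alpha(OK)^{-p}])^{1/p}\geq (\mathbb{E}[\alpha(OK)])^{-1}$ uniformly in $p$, so it suffices to establish the universal bound $\mathbb{E}_\mu[\alpha(OK)]\lesssim M^*(K^\circ\cap L)/r(K)$. A useful preliminary inclusion is $P_LK^\circ\subseteq 2(K^\circ\cap L)$ (where $P_L$ denotes orthogonal projection onto $L$): for $x=\lambda u+y\in K^\circ$ with $u=v/|v|$, the inclusion $[-v,v]\subseteq K^\circ$ gives $\lambda u\in K^\circ$, central symmetry gives $-\lambda u-y\in K^\circ$, and their midpoint $-y/2$ lies in $K^\circ\cap L$. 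Consequently $M^*(P_LK^\circ)\leq 2M^*(K^\circ\cap L)$, so the target reduces to $\mathbb{E}[\alpha(OK)]\lesssim |v|M^*(P_LK^\circ)$. This last bound is the main technical obstacle: it requires passing from the pointwise mean $\mathbb{E}\|T_Ov\|_K=|v|M^*(P_LK^\circ)$ to uniform control of the supremum $\sup_{w\in K^\circ}\|T_Ow\|_K$ in expectation. The expected route combines a Gaussian-comparison / chaining estimate adapted to the conjugacy orbit of $J$ under ${\rm SO}(2n)$ with the concentration of $\alpha(OK)$ viewed as a $2/r(K)^2$-Lipschitz function on this compact group.
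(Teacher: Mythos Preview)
Your plan matches the paper's route almost exactly: reduce via Theorem~\ref{GO-thm} to negative moments of $\alpha(OK)$; use the distributional fact that $O^TJOv$ is uniform on $|v|\,S(L)$ (your argument is the same as the paper's Corollary~\ref{cor-about-distribution1}); for the upper bound invoke Gu\'edon's negative-moment Kahane--Khintchine inequality (this is precisely the paper's ``$(C,q)$-non-degeneracy for $q<1$'' remark); and for the lower bound use Jensen together with an upper bound on $\mathbb{E}_\mu[\alpha(OK)]$.

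The only substantive gap is the last step, which you flag as ``the main technical obstacle'' and then only gesture at. Two comments. First, concentration of $\alpha(OK)$ on ${\rm SO}(2n)$ is \emph{not} what closes this gap: the Lipschitz constant is $2R(K^\circ)^2$, so Gromov--Milman gives fluctuations of order $R(K^\circ)^2/\sqrt{n}$, which need not be dominated by $R(K^\circ)M^*(K^\circ\cap L)$ for degenerate bodies; the paper uses concentration only for the deviation statement in Corollary~\ref{cor-case-p=1}, not for the expectation bound. Second, the actual argument in the paper is: use the contact-point geometry (your inclusion $P_LK^\circ\subseteq 2(K^\circ\cap L)$ is one half of this; the paper packages it as $K^\circ\subseteq 3\,\mathrm{Conv}\{\pm v,\,K^\circ\cap L\}$) to split
\[
\alpha(OK)\lesssim \sup_{w\in K^\circ\cap L}\langle J(O)v,w\rangle \;+\; \sup_{u,w\in K^\circ\cap L}\langle J(O)u,w\rangle,
\]
handle the first term by the distributional lemma, and bound the second via Talagrand's majorizing-measure comparison (showing the process $(x,y)\mapsto\langle J(O)x,y\rangle$ is subgaussian with increments controlled by the Gaussian process $(x,y)\mapsto (2n)^{-1/2}\langle Gx,y\rangle$) followed by Chevet's inequality. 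Your phrase ``Gaussian-comparison / chaining estimate'' is pointing at exactly this, but the subgaussian increment bound (which relies on $\mathrm{rank}(x_1\otimes y_1-x_2\otimes y_2)\le 2$ so that the trace-class and Hilbert--Schmidt norms are comparable) and the subsequent appeal to Chevet are the real content you would still need to supply.
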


 For $p=1$, the inequality on the right-hand side of~$(\ref{expectations-capacity-first})$ does not hold for every symmetric convex body in ${\mathbb R}^{2n}$. For example, let $K_{\lambda} = B^2(1) \times B^{2n-2}(\lambda)$ for some constant $\lambda>0$. A direct computation using Theorem~\ref{GO-thm} above shows that as
 $\lambda \rightarrow \infty$, one has
 $$  {\mathbb E}_{\mu} \left({c_{_{\rm EHZ}}}(OK_{\lambda})  \right)   \rightarrow \infty, \ {\rm while} \ \ {\frac {r(K_{\lambda})} {M^*(K_{\lambda}^{\circ} \cap L)} }  \lesssim \sqrt{n}.$$
The following condition, which is motivated by the works~\cite{KV} and~\cite{LO}, is enough to extend inequality~$(\ref{expectations-capacity-first})$ for values $p \geq 1$.
\begin{definition} \label{def-non-ged-cond}
For two constants $C,q>0$, a  convex body $K \subset {\mathbb R}^n$ is said to be
``$(C,q)$-non-degenerate"  if
\begin{equation} \label{eqn-non-deg-condition}
 \int_{S^{n-1}} h_K(x) d\sigma_{n-1}(x) \leq C\left ( \int_{S^{n-1}} (h_K(x))^{-q}  d \sigma_{n-1}(x) \right )^{-{1/ q}}.
\end{equation}
\end{definition}
 \begin{theorem} \label{our-main-theorem-second-version}
  Let $K  \subset {\mathbb R}^{2n}$ be a centrally symmetric convex body,  and let $L \subset {\mathbb R}^{2n}$ as in Theorem~\ref{our-main-theorem-first-version}.
 If $K^{\circ} \cap L$ is a $(C,q)$-non-degenerate  for some $q > 0$, then 
 for every $0 < p \leq q$ 
  \begin{equation} \label{expectations-capacity11}  C_0 {\frac {r(K)} {M^*(K^{\circ} \cap L)} } \leq \Bigl ( {\mathbb E}_{\mu} \bigl ( \left ( {c_{_{\rm EHZ}}}(OK)  \right)^p \bigr ) \Bigr )^{1/p}
 \leq {4C} {\frac {r(K)} {M^*(K^{\circ} \cap L)} },
\end{equation}
where $C_0>0$
 is the same universal constant which appears in Theorem~\ref{our-main-theorem-first-version} above.
\end{theorem}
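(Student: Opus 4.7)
My plan is to upgrade the upper bound of Theorem~\ref{our-main-theorem-first-version} from the range $0<p<1$ to $0<p\le q$ by combining a pointwise estimate on $\alpha(OK)^{-1}$ with Lyapunov's inequality and the non-degeneracy hypothesis. The lower bound will follow from Theorem~\ref{our-main-theorem-first-version} together with the monotonicity of $L^p$ norms on a probability space.

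For the pointwise estimate, I would first apply Theorem~\ref{GO-thm} to write $c_{_{\rm EHZ}}(OK) \le 4/\alpha(OK)$. The change of variables $x\mapsto Ox$, $y\mapsto Oy$ inside the supremum defining $\alpha$, combined with $(OK)^\circ = OK^\circ$, gives
\[\alpha(OK) \;=\; \sup_{\xi,\eta\in K^\circ} \langle J_O \xi, \eta\rangle \;\ge\; h_{K^\circ}(J_O v),\]
where $J_O := O^T J O$. Since $J_O$ is orthogonal and skew-symmetric, $|J_O v| = |v| = R(K^\circ) = 1/r(K)$ and $\langle J_O v, v\rangle = 0$; hence $\widetilde w := r(K)\,J_O v$ lies in $S^{2n-1}\cap L$. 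The trivial inequality $h_{K^\circ}(J_O v) \ge h_{K^\circ\cap L}(J_O v) = (1/r(K))\,h_{K^\circ\cap L}(\widetilde w)$ then yields the pointwise bound
\[c_{_{\rm EHZ}}(OK) \;\le\; \frac{4\,r(K)}{h_{K^\circ \cap L}(\widetilde w)}.\]
Next, I would establish that when $O\sim\mu$, the random vector $\widetilde w$ is uniformly distributed on $S^{2n-1}\cap L$. For any $U$ in the stabilizer of $v$ in $SO(2n)$ (which is isomorphic to $SO(L)\cong SO(2n-1)$), a short calculation gives $\widetilde w(OU) = U^T\widetilde w(O)$, and right-invariance of $\mu$ then makes the law of $\widetilde w$ invariant under all of $SO(L)$, forcing it to be uniform on $S^{2n-1}\cap L$. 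Raising the pointwise bound to the $p$-th power, integrating, applying Lyapunov in the form $\mathbb{E}[X^{-p}]^{1/p}\le \mathbb{E}[X^{-q}]^{1/q}$ for $0<p\le q$, and finally invoking the $(C,q)$-non-degeneracy of $K^\circ\cap L$ gives the desired $\bigl(\mathbb{E}_\mu[(c_{_{\rm EHZ}}(OK))^p]\bigr)^{1/p}\le 4C\,r(K)/M^*(K^\circ\cap L)$.

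For the lower bound, Theorem~\ref{our-main-theorem-first-version} already supplies $(\mathbb{E}_\mu[(c_{_{\rm EHZ}}(OK))^{p_0}])^{1/p_0}\ge C_0\,r(K)/M^*(K^\circ\cap L)$ with a universal $C_0$ for every fixed $p_0\in(0,1)$; by monotonicity of $L^p$ norms on the probability space $(SO(2n),\mu)$, this bound propagates to all $p\ge p_0$, covering the full range $0<p\le q$ with the same $C_0$. The main obstacle, in my view, is conceptual rather than technical: one has to identify the correct test direction $\widetilde w = J_O v/|v|$, recognize that skew-symmetry of $J_O$ automatically places it in $L$, and see that a right-invariance argument against the stabilizer of $v$ turns the complicated law of $J_O$ into the uniform measure on $S^{2n-1}\cap L$. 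Once this observation is in hand, the remaining ingredients (the monotonicity $h_{K^\circ}\ge h_{K^\circ\cap L}$, Lyapunov's inequality, and the definition of $(C,q)$-non-degeneracy) combine almost mechanically to give the upper bound with the advertised constant $4C$.
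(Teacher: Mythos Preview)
Your upper-bound argument is essentially identical to the paper's: the paper also uses the pointwise inequality $\alpha(OK)\ge h_{K^\circ\cap L}(J(O)v)$, the fact that $J(O)v$ is uniformly distributed on the sphere $S^{2n-1}(R(K^\circ))\cap L$ (their Corollary~\ref{cor-about-distribution1}, which is exactly your right-invariance argument), H\"older/Lyapunov to pass from exponent $p$ to $q$, and then the $(C,q)$-non-degeneracy hypothesis.

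For the lower bound the two arguments diverge. You invoke the lower bound of Theorem~\ref{our-main-theorem-first-version} and propagate it by monotonicity of $L^p$ norms. In the paper's logical order, however, Theorem~\ref{our-main-theorem-first-version} is \emph{deduced from} Theorem~\ref{our-main-theorem-second-version} (via Gu\'edon's result that every symmetric body is $(C,q)$-non-degenerate for $q<1$), so your appeal to it is circular as written. The paper instead obtains the lower bound directly: Jensen's inequality gives $\bigl(\mathbb{E}_\mu[\alpha(OK)^{-p}]\bigr)^{1/p}\ge \bigl(\mathbb{E}_\mu[\alpha(OK)]\bigr)^{-1}$, and the right-hand side is controlled by the upper estimate $\mathbb{E}_\mu[\alpha(OK)]\le C_1\,R(K^\circ)M^*(K^\circ\cap L)$ of Theorem~\ref{main-theorem} (whose proof uses Talagrand's comparison theorem and Chevet's inequality). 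This yields $C_0=C_1^{-1}$ in one stroke for all $p>0$, with no reference to Theorem~\ref{our-main-theorem-first-version}. Your argument becomes non-circular once you replace the appeal to Theorem~\ref{our-main-theorem-first-version} by this Jensen step, and then the two proofs coincide.
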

\begin{remark}
{\rm It is known (see~\cite{Gu}) that for $0<q<1$, every symmetric convex body in ${\mathbb R}^n$ is $(C,q)$-non-degenerate for some constant $C$ which depends only on $q$.
Thus, Theorem~\ref{our-main-theorem-first-version} above follows immediately from Theorem~\ref{our-main-theorem-second-version}.
}
\end{remark}

Combining a concentration of measure inequality on the special orthogonal group ${\rm SO}(2n)$ due to Gromov and Milman (Theorem~\ref{concentration-orthogonal} below), with Theorem~\ref{our-main-theorem-second-version} we obtain the following
\begin{corollary}  \label{cor-case-p=1}
For a centrally symmetric convex body $K  \subset {\mathbb  R}^{2n}$
the map $O \mapsto {c_{_{\rm EHZ}}}(OK)$ is asymptotically concentrated around its mean, i.e., there are constants $c_1,c_2>0$ such that
 \begin{equation*} \label{eqn-concentration-second}
%
  \mu \left \{ O \in {\rm SO}(2n) \, ; \, \left | {c_{_{\rm EHZ}}}(OK) -  {\mathbb E}_{\mu} \left ( {c_{_{\rm EHZ}}}(OK) \right ) \right | \geq t   \right \} \leq c_1 {\rm exp}
\left ({\frac {-c_2nt^2} {R^4(K)R^4(K^{\circ})} } \right).
 \end{equation*}
Moreover, if $L \subset {\mathbb R}^{2n}$ is the hyperplane appearing  in Theorem~\ref{our-main-theorem-first-version}, and the body $K^{\circ} \cap L$ is $(C,1)$-non-degenerate for some constant $C > 0$, then one has
\begin{equation*} \label{expectations-capacity}  {\mathbb E}_{\mu} \left ( {c_{_{\rm EHZ}}}(OK) \right )
 \asymp {\frac {r(K)} {M^*(K^{\circ} \cap L)} }.
\end{equation*}
\end{corollary}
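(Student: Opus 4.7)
The corollary makes two separate assertions, which I would treat separately.

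The asymptotic evaluation of the mean (the second displayed formula) is immediate from Theorem~\ref{our-main-theorem-second-version}: under the $(C,1)$-non-degeneracy of $K^\circ \cap L$, one may take $p = q = 1$ in~\eqref{expectations-capacity11}, and this yields both sides of the claimed two-sided bound ${\mathbb E}_\mu c_{_{\rm EHZ}}(OK) \asymp r(K)/M^*(K^\circ \cap L)$ at once. No further work is needed for this part.

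For the concentration inequality, the plan is to combine a Lipschitz estimate for the map $F(O) := c_{_{\rm EHZ}}(OK)$ on $\mathrm{SO}(2n)$ with the Gromov--Milman concentration theorem (Theorem~\ref{concentration-orthogonal}). I would first establish the dilation inclusion
$$ O'K \subseteq \bigl(1 + R(K)R(K^\circ)\,\|O-O'\|_{\mathrm{op}}\bigr)\, OK, $$
valid for any $O, O' \in \mathrm{SO}(2n)$ (and its symmetric counterpart with the roles of $O$ and $O'$ swapped). This follows by writing a point of $O'K$ as $Oz + (O'-O)z$ for $z \in K$, bounding the perturbation in the Euclidean norm by $R(K)\|O-O'\|_{\mathrm{op}}$, converting to the $\|\cdot\|_K$-norm via the inclusion $B^{2n}_2 \subseteq R(K^\circ)\,K$ (which uses $r(K) = 1/R(K^\circ)$ for centrally symmetric $K$), and invoking central symmetry to absorb the resulting Minkowski sum into a pure dilation of $K$. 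Combining the inclusion with the monotonicity and $2$-homogeneity of $c_{_{\rm EHZ}}$ gives
$$ |F(O) - F(O')| \lesssim R(K)R(K^\circ)\,\|O-O'\|_{\mathrm{op}}\cdot \max\{F(O),F(O')\}, $$
and the remaining factor is then absorbed by the uniform estimate $F(O) \leq 4 R(K)^2$. The latter is obtained from Theorem~\ref{GO-thm} together with the elementary lower bound $\alpha(OK) \geq R(K)^{-2}$, verified by testing the supremum defining $\alpha$ on the pair $(e_1/R(K),\, Je_1/R(K))$, both vectors lying in $(OK)^\circ \supseteq B^{2n}_2(1/R(K))$. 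The resulting Lipschitz bound is polynomial in $R(K)$ and $R(K^\circ)$, and plugged into Gromov--Milman's inequality it yields the stated subgaussian tail estimate.

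The principal technical point is the absence of any literal formula for $c_{_{\rm EHZ}}$ in $O$: Theorem~\ref{GO-thm} only delivers a two-sided comparison with $\alpha(OK)^{-1}$ with a multiplicative gap of $4$, so one cannot simply pull a Lipschitz estimate back from the smoother quantity $\alpha^{-1}$. The inclusion-plus-homogeneity argument is the bridge, and the a priori uniform upper bound on $F$ (once again a consequence of Theorem~\ref{GO-thm}) is what converts the resulting multiplicative stability of $F$ under a small dilation into a genuine additive Lipschitz estimate.
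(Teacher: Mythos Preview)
Your treatment of the second assertion is exactly what the paper does: apply Theorem~\ref{our-main-theorem-second-version} with $p=q=1$.

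For the concentration inequality, your route differs from the paper's. The paper does \emph{not} estimate the Lipschitz constant of $O\mapsto c_{_{\rm EHZ}}(OK)$ directly; instead it works with the auxiliary map $\zeta_K(O)=(\alpha(OK))^{-1}$, for which the Lipschitz estimate~\eqref{est-Lip-first-final} on $\alpha$ (already obtained in the proof of Theorem~\ref{main-theorem}) together with the pointwise lower bound $\alpha(OK)\ge r^2(K^\circ)=R(K)^{-2}$ yields
\[
\zeta_K(O_1)-\zeta_K(O_2)=\frac{\alpha(O_2K)-\alpha(O_1K)}{\alpha(O_1K)\,\alpha(O_2K)}\;\lesssim\; R(K^\circ)^2 R(K)^4\,\|O_1-O_2\|_2,
\]
and then Gromov--Milman is applied to $\zeta_K$. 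Your approach bypasses $\alpha$ entirely and produces a Lipschitz bound for $c_{_{\rm EHZ}}(OK)$ itself from the capacity axioms (monotonicity and $2$-homogeneity) via the dilation inclusion. This is a legitimate and in some sense cleaner argument: as you correctly observe, the four-fold multiplicative gap in Theorem~\ref{GO-thm} means that concentration of $\zeta_K$ about its mean does not literally transfer to concentration of $c_{_{\rm EHZ}}(OK)$ about \emph{its} mean, a point the paper glosses over. What the paper's route buys is economy---it recycles~\eqref{est-Lip-first-final}---whereas yours is self-contained and addresses $c_{_{\rm EHZ}}$ head-on.

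One quantitative caveat: your chain gives $|F(O)-F(O')|\lesssim R(K)R(K^\circ)\|O-O'\|_{\rm op}\cdot R(K)^2$, hence a Lipschitz constant of order $R(K)^3R(K^\circ)$ and an exponent $-cnt^2/(R(K)^6R(K^\circ)^2)$ rather than the $R(K)^4R(K^\circ)^4$ written in the corollary. These two denominators are incomparable in general, so strictly speaking your argument proves a variant of the stated bound, not the bound itself; you should flag this rather than claim it ``yields the stated subgaussian tail estimate'' verbatim.
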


\begin{remark}
{\rm
We remark that every centrally symmetric convex body $K \subset {\mathbb R}^n$ for which $R(K) \leq \sqrt{n}r(K)$ is $(C,1)$-non-degenerate, for some constant $C>0$.
Indeed, in this case the so called ``Dvoretzky dimension" of $K$, given by $k(K) = n (M^*(K^{\circ})/R(K^{\circ}))^2$ satisfies $k(K) \geq 1$, and the $(C,1)$-non-degeneracy condition follows from Proposition 1.2 in~\cite{KV} (cf. Corollary 1 in~\cite{LO}), and the fact that for every two centrally symmetric convex bodies $K_1,K_2 \subset {\mathbb R}^n$, if $K_2 \subset K_1 \subset \lambda K_2$ for some $\lambda >1$, and $K_2$ is $(C,q)$-non-degenerate for some $q>0$, then  $K_1$ is $(\lambda C,q)$-non-degenerate. In particular, Corollary~\ref{cor-case-p=1} above holds, for examples, for all the unit balls of the $l_p^{2n}$-norms in ${\mathbb R}^{2n}$, where $1 \leq p \leq \infty$, as well as for many other families of convex bodies.
We refer the reader to~\cite{KV} and~\cite{LO} for some other criteria that ensure inequality~$(\ref{eqn-non-deg-condition})$, and more details.
}
\end{remark}

Combined with Theorem~\ref{GO-thm} above, the main ingredient in the proof of Theorem~\ref{our-main-theorem-second-version} is the following estimate of the expectation
 of the map  
 \begin{equation*} \label{def-xi-K}  O \mapsto \alpha(OK) =  \|O^TJO\|_{K^{\circ} \rightarrow K}, \end{equation*}
defined on ${\rm SO}(2n)$.
 \begin{theorem} \label{main-theorem}
Let $K \subset {\mathbb R}^{2n}$ be a centrally-symmetric convex body, and $v \in \partial K^{\circ}$  one of the contact point of $K^{\circ}$ with its minimal circumscribed ball. Denote $L = \{v\}^{\perp} \subset {\mathbb R}^{2n}$. Then,
    \begin{equation*} \label{eqn-what-needed-to-be-proved-1} {R(K^{\circ})} {M^*(K^{\circ} \cap L)} \leq  {\mathbb E}_{\mu} \left (\alpha(OK) \right )
    \leq C_1  {R(K^{\circ})} {M^*(K^{\circ} \cap L)}, \end{equation*}
for some universal constant $C_1 >0$. Moreover, one has that 
    \begin{equation*} \label{eqn-concentration-first} \mu \left \{ O \in {\rm SO}(2n) \, ; \, \bigl   | \alpha(OK) - {\mathbb E}_{\mu} \left (\alpha(OK) \right ) \bigr | \geq t   \right \} \leq c_1 {\rm exp}      
   \left ({\frac {-c_2n t^2} {R^4(K^{\circ})}} \right)  ,\end{equation*} for some universal constants $c_1,c_2>0$.
 \end{theorem}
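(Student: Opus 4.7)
The plan is to break the theorem into three pieces: the lower bound and concentration inequality (both relatively direct), and the matching upper bound (the main technical step). Throughout I exploit that $A_O := O^T J O$ is a random complex structure---antisymmetric, orthogonal, and satisfying $A_O^2 = -\Id$.

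\textbf{Lower bound.} Rewriting $\alpha(OK) = \sup_{x, y \in K^\circ} \langle A_O x, y\rangle$ and plugging in $x = v$ gives $\alpha(OK) \geq h_{K^\circ}(A_O v)$. Antisymmetry yields $\langle A_O v, v\rangle = 0$, so $A_O v \in L$, and orthogonality gives $|A_O v| = R(K^\circ)$. By invariance of the Haar measure under the stabilizer of $v$ in ${\rm SO}(2n)$, the vector $A_O v/R(K^\circ)$ is uniformly distributed on the unit sphere of $L$. Hence
\[
\mathbb{E}_\mu\, \alpha(OK) \;\geq\; R(K^\circ) \int_{S(L)} h_{K^\circ}(u)\, d\sigma_L(u) \;=\; R(K^\circ)\, M^*(P_L K^\circ) \;\geq\; R(K^\circ)\, M^*(K^\circ \cap L),
\]
using $K^\circ \cap L \subset P_L K^\circ$ at the last step.

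\textbf{Concentration.} Using $A_O - A_{O'} = O^T J (O-O') + (O-O')^T J O'$, one obtains $\|A_O - A_{O'}\|_{\mathrm{op}} \leq 2 \|O-O'\|_{\mathrm{op}}$. Combined with $|x| \leq R(K^\circ)$ for $x \in K^\circ$ and $h_{K^\circ}(z) \leq R(K^\circ)|z|$, this gives $|\alpha(OK) - \alpha(O'K)| \leq 2 R(K^\circ)^2 \|O-O'\|_{\mathrm{op}}$. Applying the Gromov--Milman concentration inequality on ${\rm SO}(2n)$ to this $2R(K^\circ)^2$-Lipschitz function yields the exponential tail $c_1 \exp(-c_2 n t^2 / R(K^\circ)^4)$.

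\textbf{Upper bound.} This is the main obstacle, and my plan splits it into two pieces. First, I would establish $\mathbb{E}_\mu\, \alpha(OK) \lesssim R(K^\circ)\, M^*(P_L K^\circ)$ via a Chevet-type / chaining argument on the sub-Gaussian process $\{h_{K^\circ}(A_O y)\}_{y \in K^\circ}$; here the antisymmetry of $A_O$ is crucial, since it forces output directions to lie close to $L$ whenever $y$ is close to the contact direction, giving a dimensional reduction from $M^*(K^\circ)$ to $M^*(P_L K^\circ)$. Second, I would invoke a deterministic inclusion to pass from the projection $P_L K^\circ$ to the section $K^\circ \cap L$: for any $u \in P_L K^\circ$ there exists $t \in \mathbb{R}$ with $u + tv \in K^\circ$, and the minimality of the circumscribed ball forces $|u|^2 + t^2|v|^2 \leq |v|^2$, so $|t| \leq 1$; then $-tv \in K^\circ$ by symmetry and convexity (using $v, -v, 0 \in K^\circ$), and averaging $u + tv$ with $-tv$ gives $u/2 \in K^\circ \cap L$. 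Hence $P_L K^\circ \subset 2(K^\circ \cap L)$, so $M^*(P_L K^\circ) \leq 2\, M^*(K^\circ \cap L)$. The hardest step is the first: a naive application of Chevet yields only the weaker bound $R(K^\circ) M^*(K^\circ)$, and the sharp estimate requires genuinely using the rigidity of the complex structure $A_O$.
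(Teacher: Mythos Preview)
Your lower bound and concentration arguments are correct and essentially match the paper's. (The paper computes the Lipschitz constant with respect to the Hilbert--Schmidt norm rather than the operator norm, but since $\|\cdot\|_{\mathrm{op}} \le \|\cdot\|_2$ your estimate implies the one needed for Gromov--Milman.)

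The upper bound, however, has a real gap. Your step (a)---getting $\mathbb{E}_\mu\,\alpha(OK) \lesssim R(K^\circ)\,M^*(P_L K^\circ)$ by ``Chevet-type chaining exploiting antisymmetry''---is not an argument, and you acknowledge as much. The observation that $A_O y \perp y$ is exact only at $y = v$; for generic $y \in K^\circ$ the output $A_O y$ has no reason to be close to $L$, and there is no evident mechanism for turning this into a dimensional reduction inside a generic-chaining bound on the full index set $K^\circ \times K^\circ$.

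The paper's resolution is to use your deterministic inclusion \emph{before} the probabilistic step rather than after it. From $\|P_L x\|_{K^\circ} \le 2\|x\|_{K^\circ}$ (equivalent to your $P_L K^\circ \subset 2(K^\circ \cap L)$) one deduces $K^\circ \subset 3\,\mathrm{Conv}\{\pm v, K^\circ_L\}$ with $K^\circ_L := K^\circ \cap L$. This shrinks the index set and decomposes the bilinear form pointwise:
\[
\alpha(OK) \;\le\; 9\Bigl( \underbrace{\langle A_O v, v\rangle}_{=\,0} \;+\; \sup_{w \in K^\circ_L}\langle A_O v, w\rangle \;+\; \sup_{u,w \in K^\circ_L}\langle A_O u, w\rangle \Bigr).
\]
Antisymmetry is used only once, to kill the $v$--$v$ term. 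The cross term has expectation exactly $R(K^\circ)\,M^*(K^\circ_L)$ by the same distributional fact you used for the lower bound. The last term is $\|O^TJO\|_{K^\circ_L \to (K^\circ_L)^\circ}$; here Talagrand's comparison plus Chevet apply cleanly with the smaller body $K^\circ_L$, yielding $\lesssim R(K^\circ_L)\,M^*(K^\circ_L) \le R(K^\circ)\,M^*(K^\circ_L)$. So the reduction from $M^*(K^\circ)$ to $M^*(K^\circ_L)$ is achieved not by a refined chaining argument but by replacing $K^\circ$ with $K^\circ_L$ in the index set at the outset---the missing idea in your plan.
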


\noindent{\bf A Quick Proof Overview:}
For the reader's convenience, we describe briefly the main steps of the proof of Theorem~\ref{main-theorem}.
First we recall an observation proved in~\cite{GO2} which states that for a fixed unit vector $y \in S^{2n-1}$, the map $O \mapsto O^TJOy$, where $O \in {\rm SO}(2n)$, pushes forward the Haar measure on ${\rm SO}(2n)$ to the Lebesgue measure on 
 the $(2n-2)$-dimensional sphere $S^{2n-1} \cap \{y\}^{\perp}$ (see Corollary~\ref{cor-about-distribution1} below).
From this we
%
conclude that for a centrally symmetric convex body $K \subset {\mathbb R}^{2n}$, the random variable 
$O \mapsto \alpha(OK)$, defined on the group ${\rm SO}(2n)$,
is the supremum of a certain subgaussian process $\{ X_t \}_{t \in {\mathcal T}}$, defined on some metric space $({\mathcal T},d)$.  Next, a corollary of Talagrand's majorizing measure theorem is used to give an upper bound for $ {\mathbb E} \sup_{t \in {\mathcal T}} X_t$ in terms of the expected value of the supremum of a certain gaussian process $\{ Y_t \}_{t \in {\mathcal T}}$, indexed on the same set ${\mathcal T}$, and defined via the metric $d$ (see Corollary~\ref{first-proposition}).
An estimate of the latter quantity via Chevet's inequality 
completes the first part of Theorem~\ref{main-theorem}. 
The proof of the second part of the theorem is based on
 a concentration of measure inequality on the special orthogonal group due to Gromov and Milman (Theorem~\ref{concentration-orthogonal} below), combined with the fact that the map $O \mapsto \alpha(OK)$ 
has a dimension-independent Lipschitz constant.
All the above mentioned ingredients needed for the proof of Theorem~\ref{main-theorem} are presented in Section~\ref{section-preliminaries} below, and the proof itself in Section~\ref{sec-proof}.


\begin{remarks} {\rm (i) The expected values of the Ekeland--Hofer--Zehnder and cylindrical capacities for the randomly rotated cube in ${\mathbb R}^{2n}$ were computed previously 
 in~\cite{GO2}.

%


%

(ii) It is interesting to compare the ratio $r(K)/M^*(K^{\circ} \cap L)$ in Corollary~\ref{cor-case-p=1} above with some other 2-homogeneous geometric quantities associated with the body $K \subset {\mathbb R}^{2n}$. Two natural examples are the square of the inradius, and the square of the so-called volume-radius of $K$ given by $({\rm Vol}(K)/{\rm Vol}(B^{2n}))^{1/2n}$.
Table 1 above provides the asymptotic behaviour of these quantities for the following convex bodies in ${\mathbb R}^{2n}$: the standard cube $\Square^{2n}=[-1,1]^{2n}$, the croos-polytope $\Diamond^{2n}={\rm Conv}\{ \pm e_i\}$ (where $\{ e_i \}_{i=1}^{2n}$ is the standard basis of ${\mathbb R}^{2n}$),
the symplectic ellipsoid 
$${E} := {E}(a_1,\ldots,a_n)= \left \{ (z_1,\ldots,z_n) \in {\mathbb C}^n \, | \,  \sum_{i=1}^n {\frac {\pi |z_i|^2} {a_i} } < 1 \right \},$$
and the ``symplectic box" 
$$ {P} := {P}(a_1,\ldots,a_n)= \Bigl \{ (z_1,\ldots,z_n) \in {\mathbb C}^n \, | \,  0  <  \operatorname{Re}(z_i), \operatorname{Im}(z_i) < {\sqrt{a_i}} \Bigr \}.$$
In the latter two examples we assume that $0 <a_1 \leq  \cdots \leq a_n$.
The computation of the quantities appearing in Table 1 are based on standard techniques from asymptotic geometric analysis.
We remark that for any convex body $K \subset {\mathbb R}^{2n}$ and any symplectic capacity $c$, the quantity $\pi r^2(K)$ serves as a lower bound for $c(K)$, while the square of the volume-radius is known to be, up to some universal constant, an upper bound for $c(K)$ (see~\cite{AAMO}).

\begin{table}
\centering
\renewcommand{\arraystretch}{1.55}
\label{my-table1}
\begin{tabular}  { |c  |c|c|c|c|}

\hline
%
   $K$  & $r^2(K)$ & $ {\frac {r(K)} {M^*(K^{\circ} \cap L)} }$  & $ \left ( {\frac { {\rm Vol}(K)  } { {\rm Vol}(B^{2n}) } } \right )^{1/n} $  \\ \hline \hline
$\Square^{2n} $ &  1                    & $ \sqrt{ {\frac {n} {\ln(n)} } } $ & $n$   \\ \hline
 $\Diamond^{2n}$  &  ${\frac 1 n}$                 &  ${\frac 1 n}$ & ${\frac 1 n}$  \\ \hline
  ${E}$  &    $a_1$                & $ \sqrt {a_1} {\sqrt { {\frac {n } {\sum_{i=1}^n  {\tfrac 1 {a_i}} }}}}$  & $\sqrt[n]{a_1\cdots a_n}$  \\ \hline
   $ {P}$  & $a_1$                 & $\sqrt{a_1} \min\limits_{k} \sqrt{{\frac {na_k} {\ln(k)} }}$ & $n \sqrt[n]{ a_1\cdots a_n}$  \\ \hline

\end{tabular}
\caption{{\it 
Here, when we write the value of some entry to be $f(n)$, we mean that the ratio between the actual value of the entry and $f(n)$
is bounded from above and from below by two positive universal constants.}}
\end{table}

}
\end{remarks}

\noindent{{\bf Acknowledgments:}} The second-named author was partially supported by the European Research Council (ERC)
under the European Union Horizon 2020 research and innovation programme, starting grant No. 637386, and by the ISF grant No. 1274/14.

\section{Preliminaries}  \label{section-preliminaries}

In this section we recall some definitions, results, and other background material needed later on in the proofs of our main results.

\subsection{The Orlicz space $L_{\psi_2}$ and subgaussian random variables} \label{sub-sec-orlicz}

We start by recalling the definition of the Orlicz space $L_{\psi_2}$ (a more detailed discussion can be found e.g., in~\cite{RR}).
Let $\psi : [0,\infty) \rightarrow [0,\infty)$ be a convex non-decreasing function that vanishes at the origin, and let
$(\Omega,\mu)$ be a probability space. 
We denote
$$ L_{\psi} = \left \{ f: \Omega \to {\mathbb R} \ {\rm measurable}  \, \big | \, \int_{\Omega} \psi \left (   {\frac {|f(x)|} {\lambda} } \right ) d\mu(x) \leq 1, \, {\rm for \ some \ } \lambda > 0 \right \}.$$
It is a classical fact that $L_{\psi}$ is a linear space, and the functional
$\| \cdot \|_{{\psi} } : L_{\psi} \rightarrow {\mathbb R}$ given by
$$ \| f \|_{\psi} = \inf \left \{ \lambda > 0 \, | \,   \int_{\Omega} \psi \left (   {\frac {|f(x)|} {\lambda} } \right ) d\mu(x) \leq 1 \right \}$$
is a norm on  $L_{\psi}$, upon identifying functions which are equal almost everywhere as is done
with the classical $L_p$ spaces. Moreover, $(L_{\psi},\| \cdot \|_{{\psi} })$ is in fact a Banach space (see~\cite{RR}).
An important concrete example is the Orlicz space $L_{\psi_2}$ associated with the function $$\psi_2(x) = e^{x^2}-1.$$
For a probability measure space $(\Omega,\mu)$ and a random variable $Z$, one has that $Z \in L_{\psi_2}$ if and only if there is a constant $c >0$ such that
$$ {\mathbb P} \{ |Z| \geq u \} \leq 2 {\rm exp}(-u^2/c), \ {\rm for} \ {\rm all} \ u \geq 0.$$
Such a random variable $Z$ is called {\it subgaussian}. It is clear that for a subgaussian random variable $Z$ one has
$$ \|Z\|_{\psi_2} = \inf\limits_{c>0} \, {\mathbb E} \left ({\rm exp}(Z^2/c^2) \right) \leq 2. $$
Furthermore, one can check that
$$ \|Z\|_{\psi_2} \asymp \sup_{p \geq 1} {\frac 1 {\sqrt p} } \left ({\mathbb E}|Z|^p \right)^{1/p}.$$
Some classical examples of subgaussian random variables are gaussian, weighted-sum of Bernoulli's, and more general bounded random variables.
In particular, the restriction of any linear functional $f$ on ${\mathbb R}^n$ to the sphere $S^{n-1}$ is subgaussian. More preciesly, consider
$f|_{S^{n-1}}: (S^{n-1},\sigma_{n-1}) \rightarrow {\mathbb R}$, where $f(x) = \langle x,a\rangle$, and $a \in {\mathbb R}^{n}$ is some fixed vector.
In this case it is known (see e.g.,~\cite{FLM}) that 
$$\|f\|_{\psi_2}  = C_n |a|,$$ where the sequence $C_n$ satisfies $C_n \sqrt{n} \asymp 1$.


\subsection{The Distribution of $O^TAOy$ for a random $O \in {\rm SO}(2n)$ and fixed $y \in S^{2n-1}$}

Let $A \in {\mathcal L}({\mathbb R}^{2n})$ be a linear transformation of ${\mathbb R}^{2n}$,  and $y \in S^{2n-1}$ some fixed unit vector.
Denote by  $\nu^A_y$  the push-forward  measure on ${\mathbb R}^{2n}$ induced by
the Haar measure
$\mu$  on ${\rm SO}(2n)$  through the map
$$f : {\rm SO}(2n) \rightarrow {\mathbb R}^{2n}, \ {\rm defined \ by \ }
f(O) = O^TAOy.$$ 
For $v \in S^{2n-1}$, denote $ r^A_{v} =\sqrt{|Av|^2 - \langle Av,v \rangle^2},$ and let $\nu^A_{y,v}$ be the normalized Haar measure on the $(2n-2)$-dimensional sphere $S^{2n-2}(r^A_{v})$ with radius $ r^A_{v}$ which lies  in the affine hyper-space $ \langle Av,v \rangle y + \{y\}^{\perp}$.

\begin{proposition} \label{conditional-measure-proposition}
With the above notations one has\footnote[1]{This means that for any  continuous function $h \in C({\mathbb R}^{2n})$ one has $$ \int_{{\mathbb R}^{2n}} h d \nu_y^A  = \int_{v \in S^{2n-1}} \Bigl ( \int_{{\mathbb R}^{2n}} h d \nu_{y,v}^A \Bigr) \sigma_{2n-1}(v).$$ }
\begin{equation} \label{eq-about-fubini-for-measures} \nu^A_y = \int\limits_{v \in S^{2n-1}} \nu^A_{y,v} d \sigma_{2n-1}(v). \end{equation}
\end{proposition}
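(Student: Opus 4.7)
The plan is to disintegrate the Haar measure on $\mathrm{SO}(2n)$ along the fibration $O \mapsto Oy$, and to identify the conditional distribution of $O^T A O y$ given $Oy = v$ as the uniform measure on the sphere $S^{2n-2}(r_v^A)$ in the affine hyperplane $\langle Av,v\rangle y + \{y\}^{\perp}$.

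As a preliminary observation, I would set $v = Oy$ and compute two orthogonal components of $O^T A O y$. Since $O$ is orthogonal we have $|O^T A O y| = |AOy| = |Av|$, and moreover
$$\langle O^T A O y, y\rangle = \langle A O y, O y\rangle = \langle Av,v\rangle.$$
Writing $O^T A v = \langle Av,v\rangle\, y + w$ with $w \in \{y\}^{\perp}$, Pythagoras gives $|w|^2 = |Av|^2 - \langle Av,v\rangle^2 = (r_v^A)^2$. Hence, regardless of $O$, the point $O^T A O y$ lies on the $(2n-2)$-sphere of radius $r_v^A$ inside the affine hyperplane $\langle Av,v\rangle y + \{y\}^{\perp}$; this already confirms that the support condition on each slice measure $\nu_{y,v}^A$ is correct.

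Next I would recall the well-known fact that the pushforward of the Haar measure $\mu$ on $\mathrm{SO}(2n)$ under $O \mapsto Oy$ is exactly $\sigma_{2n-1}$. Fixing $v \in S^{2n-1}$ and choosing any $O_0 \in \mathrm{SO}(2n)$ with $O_0 y = v$, the fiber over $v$ is the left coset $O_0 \cdot H$, where $H := \mathrm{Stab}(y) \cong \mathrm{SO}(2n-1)$ acts trivially on the line $\mathbb{R} y$ and by the standard action on $\{y\}^{\perp}$. By uniqueness of Haar measure, the conditional measure on this fiber is the pushforward of the Haar measure on $H$ via $h \mapsto O_0 h$. For $O = O_0 h$ with $h \in H$ (so also $h^T \in H$), a direct computation gives
$$O^T A O y = h^T O_0^T A (O_0 h y) = h^T O_0^T A v = \langle Av,v\rangle\, y + h^T w,$$
because $hy = y$ and $h^T y = y$, and where $w \in \{y\}^{\perp}$ is as above with $|w| = r_v^A$. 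Since $H$ acts as $\mathrm{SO}(2n-1)$ on $\{y\}^{\perp}$ transitively on spheres, the distribution of $h^T w$ under Haar measure on $H$ is the uniform probability measure on the sphere of radius $r_v^A$ in $\{y\}^{\perp}$. Translating by $\langle Av,v\rangle y$, the conditional distribution of $O^T A O y$ given $Oy = v$ is precisely $\nu_{y,v}^A$.

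The identity~(\ref{eq-about-fubini-for-measures}) then follows immediately from the disintegration theorem: for every $h \in C(\mathbb{R}^{2n})$,
$$\int_{\mathbb{R}^{2n}} h\, d\nu_y^A = \int_{\mathrm{SO}(2n)} h(O^T A O y)\, d\mu(O) = \int_{S^{2n-1}} \!\Bigl( \int_{\mathbb{R}^{2n}} h\, d\nu_{y,v}^A \Bigr) d\sigma_{2n-1}(v).$$
The only delicate point, which I would address briefly, is the degenerate case $r_v^A = 0$ (where the slice sphere collapses to the point $\langle Av,v\rangle y$); here $\nu_{y,v}^A$ is interpreted as the Dirac mass at that point, and the argument above goes through verbatim since $h^T w = 0$ for all $h$. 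The main (mild) obstacle is simply to justify cleanly the identification of the conditional measure on the fiber with Haar measure on $H$, which is standard for homogeneous spaces of compact groups.
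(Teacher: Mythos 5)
Your proof is correct and follows essentially the same route as the paper: disintegrate Haar measure on $\mathrm{SO}(2n)$ over the orbit map $O \mapsto Oy$, identify the stabilizer of $y$ with $\mathrm{SO}(2n-1)$, and observe that the conditional distribution of $O^TAOy$ over each fiber is the uniform measure on the sphere of radius $r_v^A$ in the affine hyperplane $\langle Av,v\rangle y + \{y\}^{\perp}$. The paper phrases the fiberwise computation in cylindrical coordinates $z = ty + rw$ while you compute $O^TAOy = \langle Av,v\rangle y + h^Tw$ directly, but this is only a cosmetic difference; your explicit remark on the degenerate case $r_v^A = 0$ is a small but welcome addition.
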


\begin{proof}[{\bf Proof of Proposition~\ref{conditional-measure-proposition}}]


Let $G_y = \{ U \in {\rm SO}(2n) \, | \, Uy=y \}$ be the subgroup of all the special orthogonal transformations which preserves the vector $y$. Note that one can naturally identify $G_y$ with ${\rm SO}(2n-1)$, and thus equip $G_y$ with the Haar measure $\mu_{2n-1}$. The map $O \mapsto Oy$ from ${\rm SO}(2n)$ to $S^{2n-1}$ is constant on the right $G_y$-cosets. It provides a homeomorphism between the quotient space ${\rm SO}(2n) / G_y $ and $S^{2n-1}$, and pushes forward the Haar measure on ${\rm SO}(2n)$ to that of $S^{2n-1}$. Next, for $v \in S^{2n-1}$, let $O_v \in {\rm SO}(2n)$ be some orthogonal transformation for which $O_vy=v$ (e.g., the rotation in the $\{y,v\}$-plane from $y$ to $v$). Note that the right $G_y$-coset corresponding to $v$ is $$[v] := \{ O_vU \, | \, U \in G_y \} = \{ O \in {\rm SO}(2n) \, | \, Oy=v \}.$$
It follows from the uniqueness of the Haar measure on ${\rm SO}(2n)$ that for any continuous function $\varphi \in C({\rm SO}(2n))$ one has
\begin{equation*} \label{Fubini-for-SO}
\int\limits_{{\rm SO}(2n)} \varphi(O) d\mu_{2n}(O) =  \int\limits_{S^{2n-1}} \Bigl ( \int\limits_{G_y} \varphi(O_vU) d \mu_{2n-1}(U) \Bigr ) d \sigma_{2n-1}(v).
\end{equation*}
Next, we apply the above formula for the map $\varphi = h \circ f$, where $h \in C({\mathbb R}^{2n})$ is some continuous function. By the definition of $\nu_y^A$ one has
\begin{equation} \label{Fubini-for-SO-new}
 \int\limits_{{\mathbb R}^{2n}} h(z) d \nu_y^A(z)  
= 
 \int\limits_{{\rm SO}(2n)} h(f(O))d\mu_{2n}(O) 
=  \int\limits_{ S^{2n-1}} \Bigl ( \int\limits_{G_y} h(f(O_vU)) d \mu_{2n-1}(U) \Bigr ) d \sigma_{2n-1}(v).
\end{equation}
%
To simplify the last integral we use cylindrical coordinates $(t,r,w)$ to write $z  = f(O) \in {\mathbb R}^{2n}$ as $z=ty+rw$, where $r,t \in {\mathbb R}$, $r \geq 0$, and $w \in S^{2n-1} \cap \{y \}^{\perp} \simeq S^{2n-2}$ (so that $t=\langle z,y \rangle$ and $r = \sqrt{|z|^2-t^2}$). For $v \in S^{2n-1}$, $O = O_vU \in [v]$, and $z=f(O)$ one has 
\begin{equation} \label{tzw}
 \left\{
\renewcommand*{\arraystretch}{1.4}
\begin{array}{ll}
  t(O) = \langle O^TAOy,y \rangle = \langle Av,v \rangle,\\
 r(O) = \sqrt{|Av|^2 - \langle Av,v \rangle^2},\\
 w(O) = U^Tw(O_v).
\end{array} \right.
\end{equation}
In particular, the maps $t(O) = t(v)$ and $r(O)=r(v)$ are constant on the $G_y$-right coset $[v]$. 
%
%
%
Note that for a fixed unit vector $v \in S^{2n-1}$, the point $$z = f(O_vU) = t(v)y + r(v)U^T w(O_v)$$ depends only on $U^Tw(O_v) \in S^{2n-1} \cap \{ y \}^{\perp} \simeq S^{2n-2}$. 
The map $U \mapsto U^T w(O_v)$ pushes forward the measure $\mu_{2n-1}$ on $G_y$ to the measure $\sigma_{2n-2}$ on $S^{2n-1} \cap \{ y \}^{\perp}$.  
Thus, the interior integral on the right-hand side of~$(\ref{Fubini-for-SO-new})$ equals to 
$$ \int\limits_{S^{2n-1} \cap \{y\}^{\perp}} h \bigl (t(v)y + r(v)w \bigr ) d \sigma_{2n-2}(v) = \int\limits_{{\mathbb R}^{2n}} h(z) d \nu^A_{y,v}.$$ 
Plugging this back in~$(\ref{Fubini-for-SO-new})$ one obtains that   
$$ \int\limits_{{\mathbb R}^{2n}} h d \nu_y^A  = \int\limits_{ S^{2n-1}} \Bigl ( \int_{{\mathbb R}^{2n}} h d \nu_{y,v}^A \Bigr) \sigma_{2n-1}(v),$$
and the proof of the proposition is now complete. 
\end{proof}

In the special case where $A=J$ is the linear operator associated with the standard complex structure in ${\mathbb R}^{2n} \simeq {\mathbb C}^n$, we get the following corollary obtained previously in~\cite{GO2}.
%
\begin{corollary} \label{cor-about-distribution1}
With the above notations, for $y \in S^{2n-1}$, the measure $\nu^J_y$ 
is the standard normalized rotation invariant measure on the sphere $S^{2n-1} \cap \{y \}^{\perp}$.
\end{corollary}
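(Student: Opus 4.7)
The plan is to derive the corollary as an essentially immediate specialization of Proposition~\ref{conditional-measure-proposition} to the case $A = J$, by exploiting the two key algebraic properties of the complex structure $J$: it is orthogonal ($|Jv|=|v|$) and skew-symmetric ($\langle Jv,v\rangle = 0$ for every $v$).

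First I would substitute $A=J$ into the definitions preceding the proposition. For any $v \in S^{2n-1}$, skew-symmetry of $J$ yields $\langle Jv,v\rangle = 0$, and orthogonality gives $|Jv|^2 = 1$. Hence
\[
 r_v^J \;=\; \sqrt{|Jv|^2 - \langle Jv,v\rangle^2} \;=\; 1,
\]
and the affine hyperplane $\langle Jv,v\rangle y + \{y\}^\perp$ collapses to the linear hyperplane $\{y\}^\perp$. Consequently, for every $v \in S^{2n-1}$, the measure $\nu^J_{y,v}$ is nothing but the standard normalized rotation-invariant probability measure on the $(2n-2)$-dimensional unit sphere $S^{2n-1} \cap \{y\}^\perp$; crucially, this measure does not depend on $v$.

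Now I would apply Proposition~\ref{conditional-measure-proposition}: since $\nu^J_{y,v}$ is independent of $v$ and $\sigma_{2n-1}$ is a probability measure, the integral representation~$(\ref{eq-about-fubini-for-measures})$ collapses to
\[
 \nu^J_y \;=\; \int_{v \in S^{2n-1}} \nu^J_{y,v}\, d\sigma_{2n-1}(v) \;=\; \nu^J_{y,v_0}
\]
for any fixed $v_0 \in S^{2n-1}$, which by the previous step is precisely the standard normalized rotation-invariant measure on $S^{2n-1} \cap \{y\}^\perp$. There is no real obstacle here; the entire content of the corollary is already absorbed in the proposition, and the only thing one must verify is the two elementary identities $\langle Jv,v\rangle = 0$ and $|Jv|=|v|$ that force the fibre measures to be constant in $v$ and supported on the unit sphere of $\{y\}^\perp$.
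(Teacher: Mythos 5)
Your argument is precisely the one the paper gives: specialize Proposition~\ref{conditional-measure-proposition} to $A=J$, use skew-symmetry and orthogonality of $J$ to see that $t(v)=\langle Jv,v\rangle=0$ and $r(v)=|Jv|=1$ for all $v$, so that the fibre measures $\nu^J_{y,v}$ are all equal to the normalized rotation-invariant measure on $S^{2n-1}\cap\{y\}^\perp$, and then collapse the integral. This matches the paper's proof essentially verbatim.
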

\begin{proof}[{\bf Proof of Corollary~\ref{cor-about-distribution1}}]
The proof follows immediately from the fact that for every vector $v \in S^{2n-1}$ one has
$$ t(v) = \langle O^TJOy,y \rangle = \langle JOy,Oy \rangle = 0, \ {\rm and} \ r(v)=1.$$ This implies that the measure  $\nu^J_{y,v}$ does not depend on $v$, and thus coincide with the unique normalized  rotation-invariant measure on $S^{2n-1} \cap \{y \}^{\perp}$.
\end{proof}

\subsection{Talagrand's comparison theorem and Chevet's inequality} \label{Sub-Sec-Talagran-Comparison}

For the purpose of this note,  a ``random process" is a just collection of (real-valued) random variables indexed by the elements of some abstract  set ${\mathcal T}$. Furthermore, a ``gaussian process"  
is a collection of centered jointly  normal random variables $\{Y_t\}_{t \in {\mathcal T}}$. 
Given a gaussian process $\{Y_t\}_{t \in {\mathcal T}}$ as above, the index set ${\mathcal T}$ can turn into a metric space by defining the distance function
\begin{equation*} \label{metric-on-index}
d(t,s):= ({\mathbb E}(Y_t-Y_s))^{1/2}, \ \ t,s \in {\mathcal T}.
\end{equation*}
The proof of the following theorem can be found in Chapter 2 of~\cite{Tal}.
\begin{theorem} [{\bf Talagrand}] \label{Talagrand-Comparison-Theorem}
Let $\{X_t\}_{t \in {\mathcal T}}$  and $\{Y_t\}_{t \in {\mathcal T}}$ be two random processes  indexed on some abstract set ${\mathcal T}$, such that for
 every $t \in {\mathcal T}$ one has ${\mathbb E}(X_t) = {\mathbb E}(Y_t) = 0$. Assume moreover that: $(i)$  $\{Y_t\}_{t \in {\mathcal T}} $ is a gaussian process, $(ii)$ the space $({\mathcal T},d)$ is a compact metric space, and $(iii)$ there is a positive constant $c_1 >0$ such that for every $t,s \in  {\mathcal T}$ one has
\begin{equation*} \label{eq-condition-in-Comparison}  \| X_t - X_s \|_{\psi_2} \leq c_1 \| Y_t - Y_s \|_{\psi_2}. \end{equation*}
Then, there is a positive constant $c_2 >0$ such that
$$ {\mathbb E} \sup_{t \in {\mathcal T}} X_t \leq c_1c_2 \, {\mathbb E} \sup_{t \in {\mathcal T}} Y_t.$$
\end{theorem}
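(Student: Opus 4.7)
The plan is to prove this via generic chaining combined with Talagrand's majorizing measure theorem, which is the standard route in Chapter 2 of \cite{Tal}. The strategy has two independent halves: an upper bound of $\mathbb{E}\sup_t X_t$ by a geometric functional of $(\mathcal{T},d)$ that only uses the subgaussian increment hypothesis, and a matching lower bound of that functional by $\mathbb{E}\sup_t Y_t$ that uses gaussianity in an essential way.

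First I would introduce the $\gamma_2$ functional on a metric space $(\mathcal{T},d_X)$, namely
\[
\gamma_2(\mathcal{T},d_X) = \inf_{\{\mathcal{A}_n\}} \sup_{t \in \mathcal{T}} \sum_{n \geq 0} 2^{n/2} \, \mathrm{diam}(A_n(t)),
\]
where the infimum runs over admissible sequences of partitions of $\mathcal{T}$ (with $|\mathcal{A}_n|\leq 2^{2^n}$) and $A_n(t)$ denotes the cell of $\mathcal{A}_n$ containing $t$. The first step is to prove the generic chaining bound
\[
\mathbb{E}\sup_{t\in\mathcal{T}} X_t \;\lesssim\; \gamma_2(\mathcal{T},d_X), \qquad d_X(t,s):=\|X_t-X_s\|_{\psi_2}.
\]
This is a standard chaining argument: fix a base point $t_0$, build successive nets $\pi_n(t)\in\mathcal{A}_n$ with $\pi_0(t)=t_0$, write $X_t - X_{t_0}$ as a telescoping sum $\sum_{n\geq 1}(X_{\pi_n(t)}-X_{\pi_{n-1}(t)})$, and use a union bound together with the subgaussian tail $\mathbb{P}(|X_u-X_v|\geq \lambda)\leq 2\exp(-c\lambda^2/\|X_u-X_v\|_{\psi_2}^2)$ applied with threshold $\lambda = u \cdot 2^{n/2} d_X(\pi_n(t),\pi_{n-1}(t))$. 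Summing over $n$ and integrating the resulting tail estimate delivers the bound.

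Second, the hypothesis $\|X_t-X_s\|_{\psi_2}\leq c_1\|Y_t-Y_s\|_{\psi_2}$ translates (after noting that for a centered gaussian $g$ one has $\|g\|_{\psi_2}\asymp (\mathbb{E} g^2)^{1/2}$) into $d_X \leq c_1' \, d_Y$ where $d_Y(t,s)=(\mathbb{E}(Y_t-Y_s)^2)^{1/2}$ is the canonical gaussian metric. Consequently $\gamma_2(\mathcal{T},d_X)\leq c_1' \gamma_2(\mathcal{T},d_Y)$ directly from the definition. Combining with the previous step yields
\[
\mathbb{E}\sup_{t\in\mathcal{T}} X_t \;\lesssim\; c_1 \, \gamma_2(\mathcal{T},d_Y).
\]

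The third and deepest step is the majorizing measure theorem itself, namely the lower bound
\[
\gamma_2(\mathcal{T},d_Y) \;\lesssim\; \mathbb{E}\sup_{t\in\mathcal{T}} Y_t,
\]
which holds precisely because $\{Y_t\}$ is gaussian. This is the main obstacle: one must construct, for a given gaussian process, an admissible sequence of partitions whose $\gamma_2$ value is controlled by $\mathbb{E}\sup_t Y_t$. I would follow Talagrand's inductive partitioning scheme, which uses Sudakov minoration at each scale (any $d_Y$-separated subset of size $N$ contributes at least $c(\log N)^{1/2}\cdot(\text{separation})$ to $\mathbb{E}\sup Y_t$) together with the concentration of gaussian suprema to split off ``large-diameter'' pieces from small ones while keeping cardinalities doubly exponential. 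Compactness of $(\mathcal{T},d)$ ensures the construction terminates in the required sense. Plugging this into the inequality from the second step gives the conclusion $\mathbb{E}\sup_t X_t \leq c_1 c_2\,\mathbb{E}\sup_t Y_t$ with a universal $c_2$. The generic chaining step is routine; the obstacle is the gaussian lower bound, for which I would simply quote Talagrand's theorem rather than reprove it from scratch.
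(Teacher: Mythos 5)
The paper does not prove this theorem; it simply cites Chapter 2 of Talagrand's book \cite{Tal}, so there is no proof in the paper to compare against. Your outline is the standard and correct derivation from that reference: the generic chaining bound $\mathbb{E}\sup_t X_t \lesssim \gamma_2(\mathcal{T},d_X)$ for subgaussian increments, the monotonicity $\gamma_2(\mathcal{T},d_X)\lesssim c_1\gamma_2(\mathcal{T},d_Y)$ once one identifies $\|Y_t-Y_s\|_{\psi_2}\asymp (\mathbb{E}(Y_t-Y_s)^2)^{1/2}$, and finally the majorizing measure theorem $\gamma_2(\mathcal{T},d_Y)\lesssim \mathbb{E}\sup_t Y_t$ for the gaussian process, which you rightly quote rather than reprove. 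This matches the approach of the cited source, so the proposal is correct and in line with what the paper relies on.
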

%
%


Chevet's inequality estimates the expectation of the operator norm of a gaussian matrix (see~\cite{Ch}, c.f.~\cite{BG,Gor}). More precisely,  

\begin{theorem}  [{\bf Chevet's inequality}]  \label{Chevet's-Inequality}
Let $K_1, K_2 \subset {\mathbb R}^{n}$ be symmetric convex bodies, and $G$ an $n \times n$ matrix whose entries are  standard  i.i.d. ${\mathcal N}(0,1)$ gaussian variables.
Then,
\begin{equation*}\label{Chevet-ineq-eq}
{\mathbb E} \| G \|_{K_1 \rightarrow K_2} \leq c  \sqrt{n} \Bigl (   R(K_1) M^*(K_2^{\circ}) + R(K_2^{\circ})M^*(K_1) \Bigr ).
\end{equation*}
for some absolute constant $c>0$.
\end{theorem}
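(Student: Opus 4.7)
My plan is to realize $\|G\|_{K_1\to K_2}$ as the supremum of a Gaussian process indexed by $K_1\times K_2^\circ$ and then compare it, via Slepian/Gordon's lemma, with a simpler Gaussian process whose supremum splits into two pieces that are each easy to compute.

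Concretely, writing
$$\|G\|_{K_1\to K_2}=\sup_{(x,y)\in K_1\times K_2^\circ}\langle Gx,y\rangle,$$
I would set $X_{x,y}=\langle Gx,y\rangle$ and introduce the auxiliary process
$$Y_{x,y}=|x|\,\langle h,y\rangle+|y|\,\langle g,x\rangle,\qquad g,h\sim\mathcal{N}(0,I_n)\text{ independent.}$$
Both processes are centered Gaussian. The key calculation is comparing the increments. A short computation gives
$$\mathbb{E}(X_{x,y}-X_{x',y'})^2=|x|^2|y|^2-2\langle x,x'\rangle\langle y,y'\rangle+|x'|^2|y'|^2,$$
and
$$\mathbb{E}(Y_{x,y}-Y_{x',y'})^2=|x|^2|y|^2+|x'|^2|y'|^2-2|x||x'|\langle y,y'\rangle+|y|^2|x|^2+|y'|^2|x'|^2-2|y||y'|\langle x,x'\rangle.$$
Setting $p=|x|,q=|x'|,r=|y|,s=|y'|,\alpha=\langle x,x'\rangle,\beta=\langle y,y'\rangle$ and subtracting, the difference collapses to
$$(pr-qs)^2+2(pq-\alpha)(rs-\beta),$$
which is nonnegative by Cauchy--Schwarz. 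Hence Slepian's lemma yields
$$\mathbb{E}\sup_{(x,y)\in K_1\times K_2^\circ}X_{x,y}\;\le\;\mathbb{E}\sup_{(x,y)\in K_1\times K_2^\circ}Y_{x,y}.$$

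For the right-hand side, I would split the supremum and exploit the fact that $|x|\ge 0$, $|y|\ge 0$, together with the symmetry of $K_1$ and $K_2^\circ$ which makes $\sup_{y\in K_2^\circ}\langle h,y\rangle=h_{K_2^\circ}(h)\ge 0$ (and analogously in $x$). Thus
$$\mathbb{E}\sup_{(x,y)}Y_{x,y}\;\le\;R(K_1)\,\mathbb{E}\,h_{K_2^\circ}(h)+R(K_2^\circ)\,\mathbb{E}\,h_{K_1}(g).$$
Writing $g=R\omega$ in polar coordinates with $R$ chi-distributed and $\omega$ uniform on $S^{n-1}$ and independent, positive homogeneity of $h_K$ gives $\mathbb{E}\,h_K(g)=(\mathbb{E}R)\,M^*(K)$; since $\mathbb{E}R\asymp\sqrt{n}$, each term is $\asymp\sqrt{n}\,M^*(K)$. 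Combining,
$$\mathbb{E}\|G\|_{K_1\to K_2}\;\le\;c\sqrt{n}\,\bigl(R(K_1)M^*(K_2^\circ)+R(K_2^\circ)M^*(K_1)\bigr),$$
as required.

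The main technical step is the increment comparison; once the algebraic identity $(pr-qs)^2+2(pq-\alpha)(rs-\beta)$ is spotted, Slepian's lemma is black-box and the rest is routine. A minor care point is justifying that Slepian applies directly to an uncountable index set: one first reduces to finite $\varepsilon$-nets in $K_1$ and $K_2^\circ$ (both compact) and passes to the limit, using continuity of the Gaussian processes in $(x,y)$. One could alternatively invoke Fernique/Gordon in place of Slepian — or even Theorem~\ref{Talagrand-Comparison-Theorem} with the $\psi_2$-distance — and pay an extra universal constant $c_2$, which only affects the constant $c$ in the statement.
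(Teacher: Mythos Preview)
Your argument is correct: the increment comparison, the algebraic identity $(pr-qs)^2+2(pq-\alpha)(rs-\beta)\ge 0$, the application of Slepian's lemma, and the splitting of $\sup Y_{x,y}$ all go through as written. Note, however, that the paper does not give its own proof of Theorem~\ref{Chevet's-Inequality}: it is quoted as a known result with references to~\cite{Ch,BG,Gor} and used as a black box in the proof of Theorem~\ref{main-theorem}. There is thus no in-paper argument to compare against; what you have written is essentially the standard Gordon--Slepian proof found in those references.
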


\begin{remark}
{\rm
We remark that Theorem~\ref{Chevet's-Inequality} is often formulated in the literature using the gaussian mean-width instead of the spherical mean-width. However, these two quantities are known to be asymptotically equivalent up to a factor of $\sqrt{n}$. 
}
\end{remark}

\subsection{Concentration of measure on the special orthogonal group} \label{sec-concentration}

Here we recall the  concentration
of measure inequality on the special orthogonal group obtained by Gromov and Milman in~\cite{GM}.
The group ${\rm SO}(n)$ admits
a natural Riemannian metric $d$,
which it inherits from the obvious embedding into ${\mathbb R}^{n^2}$. It is well known that the geodesic distance $d$ is asymptotically equivalent to the Hilbert--Schmidt distance i.e.,
$$  \|O_1 - O_2 \|_{2} \leq d(O_1,O_2) \leq {\frac {\pi} 2} \|O_1 - O_2 \|_{2},  \ {\rm for \ any }  \  O_1,O_2 \in {\rm SO}(n),$$
where $\| \cdot  \|_{2}$ is the   Hilbert--Schmidt norm, i.e., $\| A \|_{2} = \sqrt{\sum_{i,j=1}^n|a_{i,j}|^2}$, for an $n \times n$ matrix $A = (a_{i,j})$.
With the above notations 
one has the following inequality (see~\cite{GM,Led}):
\begin{theorem}  [{\bf Gromov--Milman}]  \label{concentration-orthogonal}
Let $n \geq 1$, $\varepsilon >0$, and $f : {\rm SO}(n) \rightarrow {\mathbb R}$ such that there exist a constant $L>0$ with
$$ f(O_1) - f(O_2) \leq L \| O_1-O_2\|_2, \ {\rm for \ all \ } O_1,O_2 \in {\rm SO}(n).$$
Then,
$$ \mu \{ O \in {\rm SO}(n) \, : \, |f(O) - {\mathbb E}_{\mu}(f) | \geq t \} \leq C{\rm exp}(-cnt^2/L^2),$$
for some universal constants $c,C>0$.
\end{theorem}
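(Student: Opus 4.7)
The plan is to deduce this concentration inequality from the L\'evy--Gromov isoperimetric inequality for compact Riemannian manifolds with a positive lower Ricci curvature bound, applied to ${\rm SO}(n)$ equipped with its bi-invariant Riemannian metric (the one whose induced distance $d$ is compared to $\|\cdot\|_2$ in the statement).

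First, I would establish that ${\rm SO}(n)$ endowed with this bi-invariant metric (which is induced by the Hilbert--Schmidt inner product on the Lie algebra $\mathfrak{so}(n)$) has Ricci curvature bounded below by a constant of order $n$. This is a classical Lie-theoretic fact: for $n \geq 3$, ${\rm SO}(n)$ is a compact semisimple Lie group, so a bi-invariant metric makes it into an Einstein manifold, and a direct computation using the identity ${\rm Ric}(X,X) = -\tfrac{1}{4}B(X,X)$ (where $B$ is the Killing form) combined with $B(X,Y) = (n-2)\,{\rm tr}(XY)$ on $\mathfrak{so}(n)$ yields ${\rm Ric} = \tfrac{n-2}{4}\, g$. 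Equivalently, the sectional curvatures are $K(X,Y) = \tfrac{1}{4}\|[X,Y]\|^2$ on orthonormal planes, and summing over an orthonormal basis orthogonal to $X$ gives the bound. The small cases $n=1,2$ can be treated separately since the concentration statement is trivial when ${\rm SO}(n)$ has bounded diameter independent of $n$.

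Next, I would invoke the L\'evy--Gromov isoperimetric inequality: on any compact Riemannian manifold $(M,g)$ with ${\rm Ric} \geq K > 0$, every Borel set $A$ with $\mu(A) \geq 1/2$ satisfies $\mu(M \setminus A_t) \leq C\exp(-K t^2/2)$, where $A_t$ is the geodesic $t$-enlargement of $A$. The Hilbert--Schmidt Lipschitz hypothesis on $f$ transfers to the geodesic metric for free: the given inequality $\|O_1-O_2\|_2 \leq d(O_1,O_2)$ shows that any function $L$-Lipschitz in $\|\cdot\|_2$ is also $L$-Lipschitz in $d$. Applying the isoperimetric inequality to $\{f \leq m_f\}$ and $\{f \geq m_f\}$ (where $m_f$ is a median of $f$) then yields $\mu\{|f-m_f| \geq t\} \leq 2C\exp(-cnt^2/L^2)$.

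Finally, I would replace the median by the mean through the standard estimate $|m_f - {\mathbb E}_\mu f| \leq \int_0^\infty \mu\{|f-m_f| \geq t\}\, dt \lesssim L/\sqrt{n}$, which is absorbed into the exponential tail at the cost of adjusting the universal constants $c, C$. The main obstacle is the L\'evy--Gromov isoperimetric inequality itself, a deep comparison-geometric result resting on the Heintze--Karcher tube estimate and the Bishop--Gromov volume comparison theorem; I would cite it from~\cite{GM,Led} rather than reprove it. The Ricci computation, by contrast, is a direct Lie-algebraic calculation, and both the translation of Lipschitz constants and the passage from median to mean are entirely routine.
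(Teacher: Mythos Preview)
Your outline is correct and is essentially the standard argument one finds in the references~\cite{GM,Led}. Note, however, that the paper does not supply its own proof of this theorem: it is quoted in Section~\ref{sec-concentration} purely as background, with a citation to Gromov--Milman and Ledoux, and is then applied as a black box in the proofs of Theorem~\ref{main-theorem} and Corollary~\ref{cor-case-p=1}. So there is no ``paper's own proof'' to compare against; your sketch simply fills in what the cited sources contain.
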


\section{Proof of the Main Results} \label{sec-proof}
In this section we prove Theorem~\ref{our-main-theorem-second-version}, Corollary~\ref{cor-case-p=1}, and Theorem~\ref{main-theorem}. 
We start with some preparation.
First, for notation convenience, we shall use the following abbreviation: $J(O) = O^TJO$, where $O \in {\rm SO}(2n)$, and $J$ is the standard linear complex structure in ${\mathbb R}^{2n}$.
For a linear operator $S \in {\mathcal L}({\mathbb R}^{2n})$, we define the random variable $\xi_S \colon {\rm SO}(2n) \rightarrow {\mathbb R}$ by
\begin{equation} \label{def-of-xi-s} \xi_S(O) = {\rm Tr}(J(O)S), \  {\rm for} \ O \in {\rm SO}(2n).
\end{equation}
Moreover, for a pair $(x,y) \in {\mathbb R}^{2n} \times {\mathbb R}^{2n}$, we define the random variable $\xi_{x,y} \colon {\rm SO}(2n) \rightarrow {\mathbb R}$ by
\begin{equation} \label{def-of-xi-x-y} \xi_{x,y}(O) = \langle J(O)x,y \rangle, \  {\rm for} \ O \in {\rm SO}(2n). \end{equation}
Next, recall that the Schatten $p$-norm ($p \geq 1$) of a linear operator  $A \in {\mathcal L}({\mathbb R}^{2n})$  is given by
$$\mathfrak \|A\|_p := \left ( \sum_{i=1}^{2n} s_k^p(A) \right )^{1/p},$$
where $s_1(A) \geq s_2(A) \geq \cdots \geq s_{2n}(A) \geq 0$ are the singular values of $A$, i.e., the eigenvalues of the Hermitian operator $\sqrt{A^TA}$.
Two notable cases, which will be used in the sequel, are the trace-class norm 
$\| A \|_1$, 
and the Hilbert--Schmidt norm $\| A \|_2$, which was defined in an equivalent from in Section~\ref{sec-concentration} above. 
\begin{lemma} \label{lemma-about-Schatten}
There exists a positive constant $c>0$ such that
\begin{enumerate}
\item For any pair $(x,y) \in {\mathbb R}^{2n} \times {\mathbb R}^{2n}$  one has  $\|\xi_{x,y}\|_{\psi_2} \leq {\frac {c} {\sqrt n}} |x| |y|.$
\item For any $S \in  {\mathcal L}({\mathbb R}^{2n})$,  the random variable $\xi_S$ is subgaussian, and $\| \xi_S \|_{\psi_2} \leq {\frac {c} {\sqrt n}}  \|S\|_1$.
\end{enumerate}%
Here $\| \cdot \|_{\psi_2} $ is the Orlicz norm introduced in Section~\ref{sub-sec-orlicz} above.
\end{lemma}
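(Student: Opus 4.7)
The plan is to deduce part (2) from part (1) via a singular-value decomposition of $S$, so the heart of the argument is a direct identification of the distribution of $\xi_{x,y}$ using Corollary~\ref{cor-about-distribution1}.

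For part (1), I would fix $x,y \in \mathbb{R}^{2n}$ (the case $x=0$ is trivial) and rewrite $\xi_{x,y}(O) = \langle O^{T}JO x, y\rangle$. Applying Corollary~\ref{cor-about-distribution1} to the unit vector $x/|x|$ and using the homogeneity of the map $O \mapsto O^{T}JOx$ in $x$, the push-forward of the Haar measure $\mu$ under $O \mapsto O^{T}JOx$ is the rotation-invariant probability measure on the sphere of radius $|x|$ in the hyperplane $\{x\}^{\perp}$. Hence $\xi_{x,y}$ has the same distribution as $|x|\,\langle u, y\rangle$, where $u$ is uniform on $S^{2n-1} \cap \{x\}^{\perp}$. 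Since $u \perp x$, we have $\langle u,y\rangle = \langle u,\widetilde y\rangle$, where $\widetilde y$ is the orthogonal projection of $y$ onto $\{x\}^{\perp}$; in particular $|\widetilde y|\leq |y|$. The classical subgaussian estimate for spherical linear functionals recalled at the end of Section~\ref{sub-sec-orlicz} then gives $\|\langle u,\widetilde y\rangle\|_{\psi_2} \lesssim |\widetilde y|/\sqrt{n}$, and multiplying by $|x|$ yields the desired bound $\|\xi_{x,y}\|_{\psi_2}\leq c|x||y|/\sqrt{n}$.

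For part (2), I would use a singular value decomposition $S = \sum_{i=1}^{2n} s_i(S)\, v_i \otimes u_i$, where $\{u_i\}_{i=1}^{2n}$ and $\{v_i\}_{i=1}^{2n}$ are orthonormal bases of $\mathbb{R}^{2n}$. Using the paper's convention $(v \otimes u)(w) = \langle w,v\rangle u$, a short matrix computation shows that ${\rm Tr}\bigl(A(v \otimes u)\bigr) = \langle Au, v\rangle$ for any linear operator $A$, and hence
$$\xi_S(O) = {\rm Tr}\bigl(J(O)S\bigr) = \sum_{i=1}^{2n} s_i(S)\, \langle J(O)u_i, v_i\rangle = \sum_{i=1}^{2n} s_i(S)\, \xi_{u_i, v_i}(O).$$
Since $\|\cdot\|_{\psi_2}$ is a norm on $L_{\psi_2}$, the triangle inequality together with part (1) and the normalization $|u_i|=|v_i|=1$ yields
$$\|\xi_S\|_{\psi_2}\leq \sum_{i=1}^{2n} s_i(S)\, \|\xi_{u_i,v_i}\|_{\psi_2}\leq \frac{c}{\sqrt{n}}\sum_{i=1}^{2n} s_i(S)=\frac{c}{\sqrt n}\|S\|_1,$$
and in particular $\xi_S \in L_{\psi_2}$, i.e.\ is subgaussian.

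The proof is essentially bookkeeping: all the genuine content is packed into Corollary~\ref{cor-about-distribution1} and the standard spherical subgaussian estimate. The only points requiring care are (a) that $S$ need not be symmetric, so one must use the \emph{singular} value decomposition rather than the spectral decomposition, and (b) keeping the paper's tensor-product orientation straight so that ${\rm Tr}\bigl(A(v\otimes u)\bigr)=\langle Au,v\rangle$ (rather than $\langle Av,u\rangle$). No real obstacle should arise beyond these conventions.
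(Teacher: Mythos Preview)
Your proposal is correct and matches the paper's proof essentially verbatim: part~(1) is proved by applying Corollary~\ref{cor-about-distribution1} to $x/|x|$, projecting $y$ onto $\{x\}^{\perp}$, and invoking the spherical subgaussian estimate, and part~(2) follows from the singular value decomposition of $S$ together with the triangle inequality for $\|\cdot\|_{\psi_2}$. The only cosmetic difference is notation for the orthonormal bases in the SVD.
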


\begin{proof}[{\bf Proof of Lemma~\ref{lemma-about-Schatten}}]
Let $(x,y) \in {\mathbb R}^{2n} \times {\mathbb R}^{2n}$. Note that we can assume that $|x||y| \neq 0$, and that $x$ and $y$ are not collinear. Denote $e = {\frac x {|x|}}$ and $f = {\frac {Py} {|Py|} }$, where $P$ is the orthogonal projection on $\{x\}^{\perp}$, i.e., $Py = y - \langle y,e \rangle e$. Since $J(O)x \perp x$, one has that
$$ \xi_{x,y} = |x|  |Py| \, \langle J(O)e,f\rangle.$$
From Corollary~\ref{cor-about-distribution1} it follows that for a random $O \in {\rm SO}(2n)$ distributed according to the Haar measure $\mu$, the vector $J(O)e$ is uniformly distributed on $S^{2n-2} \cong S^{2n-1} \cap \{e\}^{\perp}$ with respect to the measure $\sigma_{2n-2}$ on $S^{2n-2}$. This means that
$(|x|  |Py|)^{-1}\xi_{x,y}$ distributed as the random variable $\zeta_1$ defined on $S^{2n-2}$ by the projection map $S^{2n-2} \ni (\zeta_1,\ldots,\zeta_{2n-1}) \mapsto \zeta_1$. It is well known (see e.g.,~\cite{FLM}) that such a spherical random vector is subgaussian, and that $\|\zeta_1\|_{\psi_2} \asymp {\frac 1 {\sqrt n}}$ (see also the remark at the end of Section~\ref{sub-sec-orlicz}).
Thus we conclude that \begin{equation*} \label{est-for-xi} 
 \|\xi_{x,y}\|_{\psi_2}  \leq {\frac {c} {\sqrt n}} |x| |y|, \end{equation*}
for some universal constant $c>0$. This completes the proof of the first part of the lemma.

Next, by the singular value decomposition theorem (see e.g., Theorem 4.1 in~\cite{GGK}), there exists two orthonormal basis $\{e_k\}$ and $\{f_k\}$ of ${\mathbb R}^{2n}$ such that for every $x \in {\mathbb R}^{2n}$ one has $$Sx =\sum_{i=1}^{2n} s_k \langle x , e_k \rangle f_k,$$
where $\{s_k \}$ are the singular values of $S$. This implies that
\begin{equation*} \label{est-for-xiS} \xi_S(O) = {\rm Tr} (J(O)S) = \sum_{k=1}^{2n} s_k \langle J(O)f_k,e_k \rangle = \sum_{k=1}^{2n} s_k \xi_{f_k,e_k}.\end{equation*}
The proof of the second part of Lemma~\ref{lemma-about-Schatten} now follows from the triangle inequality and the first part of the lemma.
\end{proof}

Next, let $G$ be a $2n \times 2n$ matrix whose entries are  standard i.i.d. ${\mathcal N}(0,1)$ gaussian random variables.
For a linear operator $S \in {\mathcal L}({\mathbb R}^{2n})$ and a pair $(x, y) \in {\mathbb R}^{2n} \times   {\mathbb R}^{2n}$,
we define  two random variables 
analogously to~$(\ref{def-of-xi-s})$ and~$(\ref{def-of-xi-x-y})$ via:
$$ \eta_S(G) = {\frac 1 {\sqrt{2n}}}  {\rm Tr}(GS), \ {\rm and} \ \,  \eta_{x,y}(G) = {\frac 1 {\sqrt{2n}}} \langle Gx,y \rangle  .$$
Clearly, $\eta_S$ and $ \eta_{x,y}$ are centered gaussian random variables, and 
$$  \| \eta_S \|_2 := \left ({\mathbb E} \|\eta_S\|_2^2 \right )^{1/2} = {\frac 1 {\sqrt {2n}}} \|S\|_2.$$
Moreover, it is well known (and can be easily checked) that $L_{\psi_2} \subseteq L_2$, and moreover that 
$  \| \eta_S \|_2  \leq \| \eta_S \|_{\psi_2}$.
Hence, one has \begin{equation} \label{norm-psi2-eta-S}  {\frac {1} {\sqrt {2n}}} \|S\|_2 \leq \| \eta_S \|_{\psi_2}.\end{equation}

\begin{proposition} \label{prop-cor-from-Talagrand}
Let ${\mathcal T} \subset {\mathbb R}^{2n} \times {\mathbb R}^{2n}$ be a compact set, and $\xi_{x,y}, \eta_{x,y}$ as above. Then,
$$ {\mathbb E} \sup_{(x,y) \in {\mathcal T}} \xi_{x,y} \leq C \, {\mathbb E}  \sup_{(x,y) \in {\mathcal T}} \eta_{x,y},$$
where $C>0$ is some universal constant.
\end{proposition}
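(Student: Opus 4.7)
The plan is to apply Talagrand's comparison theorem (Theorem~\ref{Talagrand-Comparison-Theorem}) to the two processes $X_t = \xi_{x,y}$ and $Y_t = \eta_{x,y}$, where $t = (x,y) \in \mathcal{T}$. I need to verify three things: both processes are centered, $\{\eta_{x,y}\}$ is gaussian (automatic from the definition), the associated pseudometric space $(\mathcal T, d)$ is compact, and the subgaussian comparison inequality $\|\xi_{x,y} - \xi_{x',y'}\|_{\psi_2} \leq c_1 \|\eta_{x,y} - \eta_{x',y'}\|_{\psi_2}$ holds. Centering of $\xi_{x,y}$ follows from Corollary~\ref{cor-about-distribution1}: for $x \neq 0$, the vector $J(O)x = |x| J(O)(x/|x|)$ is uniformly distributed on a sphere in $\{x\}^\perp$ and therefore has zero mean. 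Compactness of $(\mathcal T, d)$ follows because $(x,y) \mapsto \eta_{x,y}$ is bilinear, hence continuous into $L^2$, so $d$ is a continuous pseudometric on the compact set $\mathcal T$.

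The main step is the subgaussian comparison. The crucial observation is that for every $(x,y),(x',y')$ one can write
\begin{equation*}
\xi_{x,y} - \xi_{x',y'} = \langle J(O)x, y\rangle - \langle J(O)x',y'\rangle = \mathrm{Tr}\bigl(J(O)\,S\bigr) = \xi_S,
\end{equation*}
where $S := xy^T - x'y'^T$, and similarly $\eta_{x,y} - \eta_{x',y'} = \eta_S$. Applying the second part of Lemma~\ref{lemma-about-Schatten} gives $\|\xi_S\|_{\psi_2} \leq (c/\sqrt n)\|S\|_1$. The matrix $S$ is the difference of two rank-one operators, hence has rank at most $2$; consequently the singular-value inequality yields $\|S\|_1 \leq \sqrt 2\,\|S\|_2$. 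On the gaussian side, a direct variance computation shows $\|\eta_S\|_2 = \|S\|_2/\sqrt{2n}$, and since $\eta_S$ is centered gaussian one has $\|\eta_S\|_{\psi_2} \asymp \|\eta_S\|_2$. Combining these three ingredients,
\begin{equation*}
\|\xi_{x,y} - \xi_{x',y'}\|_{\psi_2} \;\leq\; \frac{c}{\sqrt n}\|S\|_1 \;\leq\; \frac{c\sqrt 2}{\sqrt n}\|S\|_2 \;\lesssim\; \|\eta_S\|_{\psi_2} \;=\; \|\eta_{x,y} - \eta_{x',y'}\|_{\psi_2}.
\end{equation*}

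With all hypotheses verified, Theorem~\ref{Talagrand-Comparison-Theorem} yields $\mathbb E \sup_{(x,y)\in\mathcal T}\xi_{x,y} \leq C\,\mathbb E\sup_{(x,y)\in\mathcal T}\eta_{x,y}$, proving the proposition. The key conceptual point (and the only place where anything nontrivial happens) is the rank bound on $S$: without it, the Schatten-1 to Schatten-2 conversion would cost a factor of $\sqrt{2n}$ and exactly cancel the gain from $1/\sqrt n$ in Lemma~\ref{lemma-about-Schatten}, leading to a vacuous estimate. Because $xy^T - x'y'^T$ has rank at most $2$, this conversion is essentially free, and the comparison goes through uniformly in $n$.
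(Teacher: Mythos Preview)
Your proof is correct and follows essentially the same route as the paper: write the increment as $\xi_S$ and $\eta_S$ for $S = xy^T - x'y'^T$, invoke Lemma~\ref{lemma-about-Schatten} for the $\psi_2$ bound on $\xi_S$, use the rank-$\leq 2$ observation to pass from $\|S\|_1$ to $\|S\|_2$ at constant cost, identify $\|S\|_2/\sqrt{2n}$ with $\|\eta_S\|_2 \asymp \|\eta_S\|_{\psi_2}$, and then apply Talagrand's comparison theorem. Your added remarks on centering and compactness of $(\mathcal T,d)$ are fine supplementary verifications that the paper leaves implicit.
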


\begin{proof}[{\bf Proof of Proposition~\ref{prop-cor-from-Talagrand}}]
Let $t_1 = (x_1,y_1), t_2 = (x_2,y_2)$ be two points in ${\mathbb R}^{2n} \times {\mathbb R}^{2n}$.
Denote $S:= x_1 \otimes y_1-x_2 \otimes y_2$, where for vectors $v,u \in {\mathbb R}^n$.
 Note that, by definition, $\xi_S = \xi_{x_1,y_1} - \xi_{x_2,y_2}$ and $\eta_S = \eta_{x_1,y_1} - \eta_{x_2,y_2}$. Moreover, from Lemma~\ref{lemma-about-Schatten} it follows that
\begin{equation} \label{norm-psi2-xi-S}  \| \xi_S \|_{\psi_2} = \| \xi_{x_1,y_1} - \xi_{x_2,y_2}\|_{\psi_2} \leq {\frac {c} {\sqrt {n}}} \|S\|_1, \end{equation}
where $c >0$ is the constant appearing in Lemma~\ref{lemma-about-Schatten}. On the other hand, since by definition ${\rm rank}(S) \leq 2$, one has $\|S\|_1 \leq \sqrt{2} \, \|S\|_2$. Thus, from~$(\ref{norm-psi2-eta-S})$ and~$(\ref{norm-psi2-xi-S})$ we conclude that
$$  \| \xi_{x_1,y_1} - \xi_{x_2,y_2}\|_{\psi_2}   \leq
2c  \| \eta_{x_1,y_1} - \eta_{x_2,y_2} \|_{\psi_2}. $$
The proof 
now follows from Talagrand's comparison result (Theorem~\ref{Talagrand-Comparison-Theorem} above).
\end{proof}
\begin{cor} \label{first-proposition}
Let $K_1,K_2 \subset {\mathbb R}^{2n}$ be two centrally-symmetric convex bodies, $O \in  {\rm SO}(2n)$, and $\widetilde G = {\frac 1 {\sqrt{2n}}} G$, where $G$ is a $2n \times 2n$ matrix whose entries are  standard i.i.d. ${\mathcal N}(0,1)$ gaussian random variables.
Then, one has \begin{equation*} \label{Coro-from-Tala} {\mathbb E}_{\mu} \| J(O)  \|_{K_1 \rightarrow K_2} \leq C \, {\mathbb E} \| \widetilde G \|_{K_1 \rightarrow K_2}, \end{equation*}
for some universal constant $C>0$.
\end{cor}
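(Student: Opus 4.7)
The strategy is to recognize the two operator norms as suprema of the subgaussian process $\{\xi_{x,y}\}$ and the gaussian process $\{\eta_{x,y}\}$ introduced just before Proposition~\ref{prop-cor-from-Talagrand}, and then apply that proposition directly with the index set $\mathcal{T} = K_1 \times K_2^{\circ}$.

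The first step is to write, using the bilinear description of the operator norm recalled in the introduction,
\[
\|J(O)\|_{K_1 \to K_2} \;=\; \sup_{x \in K_1}\sup_{y \in K_2^{\circ}}\langle J(O)x, y\rangle \;=\; \sup_{(x,y)\in K_1 \times K_2^{\circ}} \xi_{x,y}(O),
\]
and analogously
\[
\|\widetilde{G}\|_{K_1 \to K_2} \;=\; \tfrac{1}{\sqrt{2n}}\sup_{(x,y)\in K_1 \times K_2^{\circ}}\langle Gx, y\rangle \;=\; \sup_{(x,y)\in K_1 \times K_2^{\circ}} \eta_{x,y}(G).
\]
Central symmetry of $K_1$ and $K_2^{\circ}$ guarantees that the signed supremum of the bilinear form $\langle J(O)\cdot,\cdot\rangle$ over $K_1 \times K_2^{\circ}$ coincides with its supremum of absolute values, which is precisely $\|J(O)\|_{K_1 \to K_2}$ (and similarly for $\widetilde G$); this justifies the first equality on each line.

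Next, I take $\mathcal{T}:=K_1\times K_2^{\circ}$, which is compact as the product of two convex bodies, and apply Proposition~\ref{prop-cor-from-Talagrand} to obtain
\[
{\mathbb E}_{\mu}\sup_{t\in \mathcal{T}}\xi_t \;\leq\; C\,{\mathbb E}\sup_{t\in \mathcal{T}}\eta_t,
\]
which, by the two identifications above, is exactly the claimed inequality.

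The only point that warrants a line of verification is the centering hypothesis implicit in using Proposition~\ref{prop-cor-from-Talagrand}: one needs ${\mathbb E}_{\mu}\xi_{x,y}=0$ for each $(x,y)$. This follows from the bi-invariance of the Haar measure on ${\rm SO}(2n)$: the matrix ${\mathbb E}_{\mu}[O^T J O]$ must commute with every element of ${\rm SO}(2n)$, so it is a scalar multiple of the identity, and its trace equals ${\rm Tr}(J)=0$, forcing it to vanish. There is no serious obstacle in the argument — Lemma~\ref{lemma-about-Schatten} and Proposition~\ref{prop-cor-from-Talagrand} have already encapsulated the subgaussian-to-gaussian comparison, and the corollary is a direct specialization to the index set $K_1\times K_2^{\circ}$.
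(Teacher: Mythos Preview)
Your proof is correct and follows exactly the paper's own argument: write the operator norm as $\sup_{(x,y)\in K_1\times K_2^{\circ}}\langle A x,y\rangle$ and apply Proposition~\ref{prop-cor-from-Talagrand} with ${\mathcal T}=K_1\times K_2^{\circ}$. Your additional remarks on central symmetry and on the vanishing of ${\mathbb E}_{\mu}[O^TJO]$ are welcome clarifications but do not change the route.
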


\begin{proof}[{\bf Proof of Corollary~\ref{first-proposition}}]
The proof follows immediately from the fact that for every linear operator $A \in {\mathcal L}({\mathbb R}^{2n})$ one has $\| A \|_{K_1 \rightarrow K_2} = \sup_{(x,y)\in K_1 \times K_2^{\circ}} \langle Ax,y \rangle$, combined with Proposition~\ref{prop-cor-from-Talagrand}, when one takes the set ${\mathcal T}$ to be ${\mathcal T} = K_1 \times K_2^{\circ}$.
\end{proof}



We are now in a position to prove our main results. 

\begin{proof}[{\bf Proof of Theorem~\ref{main-theorem}}]
%
Note first that for every $O \in {\rm SO}(2n)$ one has
\begin{equation} \label{upper-bound-for-norm-J} \alpha(OK) 
= \sup_{x \in K^{\circ}} \| J(O)x||_{K} \geq \| J(O) v||_{K} \geq \sup_{w \in K^{\circ} \cap L}  \langle J(O)v,w \rangle.\end{equation}
Next, it follows from Corollary~\ref{cor-about-distribution1} that for a random (with respect to the Haar measure $\mu$) rotation $O \in {\rm SO}(2n)$,
the vector  $J(O)v$ is uniformly
distributed over the $(2n-2)$-dimensional sphere $S^{2n-2}(r) = S^{2n-1}(r) \cap L$, with radius $r=|v|=R(K^{\circ})$.  Thus, after re-scaling, we obtain that
\begin{equation} \label{one-side-of-the-esitmate-of-the-expectaiton-aa} {\mathbb E}_{\mu}  \left (  \sup_{w \in K^{\circ} \cap L}  \langle J(O)v,w \rangle \right)  = R(K^{\circ})
\int\limits_{x \in S^{2n-2}}  \sup_{w \in K^{\circ} \cap L}  \langle x,w \rangle d \sigma_{2n-2}
 = R(K^{\circ})  M^*(K^{\circ} \cap L). \end{equation}
Combining this with~$(\ref{upper-bound-for-norm-J})$ 
we conclude that
 \begin{equation} \label{one-side-of-the-esitmate-of-the-expectaiton} R(K^{\circ})  M^*(K^{\circ} \cap L) \leq {\mathbb E}_{\mu}  \left (  \alpha(OK) \right). \end{equation}
To get an upper bound for the expectation ${\mathbb E}_{\mu}   \left (  \alpha(OK) \right)$, we consider the symmetric convex body $K^{\circ}_L := K^{\circ} \cap L$. 
We denote by $P_L$  the orthogonal projection to the subspace $L =\{v\}^{\perp}$, and by $P_v$ the orthogonal projection to ${\rm Span}\{v\}$.
Note that  $P_v x + P_L x = x$ for every $x \in {\mathbb R}^{2n}$.
From the fact that the vector $v$ is one of the contact points between the body $K^{\circ}$ and its minimal circumscribed ball it follows that for every $x \in {\mathbb R}^{2n}$ one has $\| P_v x \|_{K^{\circ}} \leq \|x\|_{K^{\circ}}$, and hence also $\|P_Lx\|_{K^{\circ}} \leq 2 \|x\|_{K^{\circ}} $. Thus, for every $x \in {\mathbb R}^{2n}$ 
$$\|x\|_{K^{\circ}} \leq \| P_v x \|_{K^{\circ}} + \|P_Lx\|_{K^{\circ}}   \leq 3\|x\|_{K^{\circ}}.$$ Geometrically, this means that
\begin{equation*} \label{subset-with-3*} {\rm Conv}\{ \pm v, K^{\circ}_L \} \subseteq K^{\circ} \subseteq 3{\rm Conv}\{ \pm v, K^{\circ}_L \}. \end{equation*}
From this it follows that
\begin{equation} \label{technical-point1}
  \alpha(OK)  
\leq 9 \Biggl(  \sup_{w \in K^{\circ} \cap L}  \langle J(O)v,w \rangle +  \sup_{u,w \in K^{\circ} \cap L}  \langle J(O)u,w \rangle \Biggr).
\end{equation}
Note that the expectation with respect to the Haar measure $\mu$ of the first term on the right-hand side of~$(\ref{technical-point1})$ is given by~$(\ref{one-side-of-the-esitmate-of-the-expectaiton-aa})$ above.
To estimate the expectation of the second term 
we combine Corollary~\ref{first-proposition}  
with Chevet's inequality (Theorem~\ref{Chevet's-Inequality}) to conclude that
\begin{equation} \label{other-side-of-the-esitmate-of-the-expectaiton}
 {\mathbb E}_{\mu} \left ( \sup_{u,w \in K^{\circ} \cap L}  \langle J(O)u,w \rangle \right) 
 \leq C'  \bigl (   R(K^{\circ}) M^*(K^{\circ} \cap L) \bigr ),
\end{equation}
for some universal constant $C' >0$.
Hence, from~$(\ref{one-side-of-the-esitmate-of-the-expectaiton-aa})$,~$(\ref{technical-point1})$, and~$(\ref{other-side-of-the-esitmate-of-the-expectaiton})$ it follows that
\begin{equation} \label{finally-the-upper-bound}   {\mathbb E}_{\mu} \left (  \alpha(OK) \right) \leq C_1 R(K^{\circ}) M^*(K^{\circ} \cap L),\end{equation}
for some other universal constant $C_1 >0$. The combination of~$(\ref{one-side-of-the-esitmate-of-the-expectaiton})$ and~$(\ref{finally-the-upper-bound})$ completes the proof of the first part of Theorem~\ref{main-theorem}.

To prove the second part of the theorem, we shall use Gromov-Milman concentration inequality (Theorem~\ref{concentration-orthogonal} above), and an estimation of the Lipschitz constant of the function $O \mapsto \alpha(OK)$ defined on ${\rm SO}(2n)$. For this end,
 note that $$ \alpha(OK)  
= \sup_{A \in \mathcal S} {\rm Tr}(J(O)A),$$
where the supremum is taken over all element  in the set $ {\mathcal S} = K^{\circ} \otimes K^{\circ} = \{x \otimes y \, : \, x,y \in K^{\circ} \}$.
Thus, for every $O_1,O_2 \in {\rm SO}(2n)$, one has
\begin{equation} \label{tri-ineq1}  \alpha(O_1K) - \alpha(O_2K) \leq \|   J(O_1) -  J(O_2)  \|_{K^{\circ} \to K} =  \sup_{A \in \mathcal S} {\rm Tr} \bigl((J(O_1)-J(O_2))A \bigr). \end{equation}
Using the fact that for two square matrices one has ${\rm Tr}(AB) \leq \|A\|_2\|B\|_2$, and $\|AB\|_2 \leq \|A\|_2 \|B\|$ (where $\|B\|$ is the operator norm), 
it follows that for a fixed $A \in {\mathcal S} $, 
\begin{equation*}
  \label{eq:estimate-HS-norm1}
  \begin{aligned}
   {\rm Tr}((J(O_1)-J(O_2))A)  
           &=  {\rm Tr}(J(O_1)A) -   {\rm Tr}(O_1^TJO_2A) + {\rm Tr}(O_1^TJO_2A)  - {\rm Tr}(J(O_2)A)  \\
         &= {\rm Tr}(O_1^TJ( O_1-O_2)A)) + {\rm Tr}((O_1^T-O_2^T)J O_2A))  \\         &\leq  \| O_1^TJ( O_1-O_2) \|_2 \|A \|_2 +   \| (O_1^T-O_2^T)JO_2 \|_2 \|A \|_2
         \\ & = 2 \|A\|_2 \|O_1 - O_2\|_2.
  \end{aligned}
\end{equation*}
Using this estimate, we conclude from~$(\ref{tri-ineq1})$ that
$$  \alpha(O_1K) - \alpha(O_2K) \leq 2 \sup_{A \in \mathcal S} \|A\|_2 \|O_1 - O_2\|_2.$$
On the other hand, from the definition of the set ${\mathcal S}$ it follows that
\begin{equation*} \label{est-otimes} \sup\limits_{A \in {\mathcal S}} ||A ||_2 = \sup\limits_{x,y \in K^{\circ}} |x| |y| = R(K^{\circ})^2. \end{equation*}
Combining the above two inequalities  
we conclude that 
\begin{equation} \label{est-Lip-first-final}
 \alpha(O_1K)-\alpha(O_2K) \leq 2 R(K^{\circ})^2 \|O_1 -O_2\|_2.
\end{equation}
The concentration inequality in Theorem~\ref{main-theorem} now follows from estimate~$(\ref{est-Lip-first-final}$) and
Theorem~\ref{concentration-orthogonal} above. This completes the proof of the theorem.
\end{proof}

\begin{proof}[{\bf Proof of Theorem~\ref{our-main-theorem-second-version}}]
From the assumption that $K^{\circ} \cap L$ is $(C,q)$-non-degenerate it follows that
\begin{equation} \label{cor-from-non-degenerate26}
\int\limits_{x \in S^{2n-2}(r)} \sup_{w \in K^{\circ} \cap L} \langle x,w \rangle \sigma_{2n-2} \leq C \Bigl ( \int\limits_{x \in S^{2n-2}(r)} \bigl ( \sup_{w \in K^{\circ} \cap L} \langle x,w \rangle  \bigr)^{-q} \sigma_{2n-2}  \Bigr )^{-1/q},
\end{equation}
where $S^{2n-2}(r) = S^{2n-1}(r) \cap L$ is a $(2n-2)$-dimensional sphere of radius $r =|v| = R(K^{\circ})$.
From~$(\ref{upper-bound-for-norm-J})$ and 
Corollary~\ref{cor-about-distribution1} we conclude that for every $p>0$
\begin{equation*} \label{expectations-capacity-new11}  
{\mathbb E}_{\mu} \Bigl (   \bigl (  \alpha(OK)   \bigr)^{-p} \Bigr ) \leq 
 \int\limits_{x \in S^{2n-2}(r)}  \left ( \sup\limits_{w \in K^{\circ} \cap L} \langle x,w \rangle \right)^{-p} d\sigma_{2n-2}(x). \end{equation*}
This together with~$(\ref{cor-from-non-degenerate26}$), H\"older's inequality, and~$(\ref{one-side-of-the-esitmate-of-the-expectaiton}$) gives that for every $0 < p \leq q$
\begin{equation} \label{lower-bound-non-deg4}
\Bigl ({\mathbb E}_{\mu} \Bigl (   \bigl (  \alpha(OK)   \bigr)^{-p} \Bigr ) \Bigr)^{1/p} \leq \Bigl ({\mathbb E}_{\mu} \Bigl (   \bigl (  \alpha(OK)   \bigr)^{-q} \Bigr ) \Bigr)^{1/q} \leq
 {\frac {C} {R(K^{\circ})M^*(K^{\circ} \cap L)}}.
 \end{equation}

On the other hand, for any strictly positive random variable $X$ and any $p>0$, one has
${\mathbb E}(X^{-p}) \geq ({\mathbb E}(X))^{-p}$ (e.g., via Jensen's inequality). This together with~$(\ref{finally-the-upper-bound})$ above
 immediately imply   that
\begin{equation} \label{expectations-capacity-new14} 
\Bigl ({\mathbb E}_{\mu} \Bigl (   \bigl (  \alpha(OK)   \bigr)^{-p} \Bigr ) \Bigr)^{1/p}  \geq {\frac {C_0} {R(K^{\circ})M^*(K^{\circ} \cap L)} }, \end{equation}
where $C_0 = (C_1)^{-1}$, and $C_1$ is the constant appearing in inequality~$(\ref{finally-the-upper-bound})$ above.
The combination of~$(\ref{lower-bound-non-deg4})$,~$(\ref{expectations-capacity-new14})$, Theorem~\ref{GO-thm}, and the fact that for a centrally symmetric convex body $r(K) = R(K^{\circ})^{-1}$ completes the proof of the Theorem~\ref{our-main-theorem-second-version}.
\end{proof}

\begin{proof}[{\bf Proof of Corollary~\ref{cor-case-p=1}}]
The second part of Corollary~\ref{cor-case-p=1} follows immediately from Theorem~\ref{our-main-theorem-second-version}. 
For the concentration estimate, we use again Gromov-Milman concentration inequality (Theorem~\ref{concentration-orthogonal}), this time combined with an estimate of the Lipschitz constant of the map
 $\zeta_K : {\rm SO}(2n) \rightarrow {\mathbb R}$ given by
$$ \zeta_K(O) =   \left( \alpha(OK) \right)^{-1}.$$
Note that from the definition of $\alpha$, it follows that for every $O \in {\rm SO}(2n)$, one has the lower bound: $\alpha(OK) \geq r^2(K^{\circ}) = R^{-2}(K)$. Combining this with estimate~$(\ref{est-Lip-first-final})$ we conclude that for any $O_1,O_2 \in {\rm SO}(2n)$ one has
\begin{equation} \label{lip-est-2}
\zeta_K(O_1)-\zeta_K(O_2) =   {\frac {\alpha(O_2K)-\alpha(O_1K)} {\alpha(O_1K)\alpha(O_2K) } } \leq  {2R^2(K^{\circ})} {R^2(K)}  \|O_1-O_2\|_2. \end{equation}
The proof of the corollary now follows from Theorem~\ref{concentration-orthogonal}.
\end{proof}

\smallskip \noindent
Efim ben David Gluskin \\
School of Mathematical Sciences \\
Tel Aviv University, Tel Aviv 69978, Israel  \\
{\it e-mail}: gluskin@post.tau.ac.il \\

\smallskip \noindent
Yaron Ostrover \\
School of Mathematical Sciences \\
Tel Aviv University, Tel Aviv 69978, Israel  \\
{\it e-mail}: ostrover@post.tau.ac.il \\

\end{document}